\newtheorem*{rep@theorem}{\rep@title}
\newcommand{\newreptheorem}[2]{
\newenvironment{rep#1}[1]{
\def\rep@title{#2 \ref{##1}}
\begin{rep@theorem}}
{\end{rep@theorem}}}
\def\subsubsection{\@startsection{subsubsection}{3}%
  \z@{.5\linespacing\@plus.7\linespacing}{.1\linespacing}%
  {\normalfont\itshape}}
\def\subsection{\@startsection{subsection}{3}%
  \z@{.5\linespacing\@plus.7\linespacing}{.1\linespacing}%
  {\normalfont\bfseries}}
\theoremstyle{plain}
\newtheorem{thm}{Theorem}[section]
\newtheorem{cor}[thm]{Corollary}
\newtheorem{lem}[thm]{Lemma}
\newtheorem{prop}[thm]{Proposition}
\theoremstyle{definition}
\newtheorem{de}[thm]{Definition}
\theoremstyle{remark}
\newtheorem{rem}[thm]{Remark}
\newcommand{\visom}{
\mathrel{\reflectbox{\rotatebox[origin=c]{90}{$\simeq$}}}}
\newcommand{\matrixofmap}{\mathrel{\begin{pmatrix}
  1 & \mapsto &(1,1) \\
\sigma &\mapsto &(x,-1)
 \end{pmatrix}}}
\newcommand{\qq}{\mathbb{Q}}
\newcommand{\ff}{\mathbb{F}}
\newcommand{\zz}{\mathbb{Z}}
\newcommand{\cc}{\mathbb{C}}
\newcommand{\pp}{\mathbb{P}}
\newcommand{\affine}{\mathbb{A}}
\newcommand{\rr}{\mathbb{R}}
\newcommand{\Spec}{\mathrm{Spec}}
\newcommand{\Proj}{\mathrm{Proj}}
\newcommand{\ri}[1]{\mathcal{O}_{#1}}
\newcommand{\fD}{\mathfrak{D}}
\newcommand{\fA}{\mathfrak{A}}
\newcommand{\deff}{{\buildrel\rm def\over=}}
\newcommand{\Rat}{\mathrm{Rat}}
\newcommand{\Aut}{\mathrm{Aut}}
\newcommand{\Gal}{\mathrm{Gal}}
\newcommand{\SLtwo}{\mathrm{SL_{2}}}
\newcommand{\pgltwo}{\mathrm{PGL_{2}}}
\newcommand{\GLtwo}{\mathrm{GL_{2}}}
\newcommand{\calC}{\mathcal{C}}
\newcommand{\calD}{\mathcal{D}}
\newcommand{\vt}{\vartheta}
\newcommand{\ti}[1]{\textit{#1}}
\newcommand{\bd}{\begin{de}}
\newcommand{\ed}{\end{de}}
\newcommand{\bl}{\begin{lem}}
\newcommand{\el}{\end{lem}}
\newcommand{\bp}{\begin{prop}}
\newcommand{\ep}{\end{prop}}
\newcommand{\bt}{\begin{thm}}
\newcommand{\et}{\end{thm}}
\newcommand{\bc}{\begin{cor}}
\newcommand{\ec}{\end{cor}}
\newcommand{\bpf}{\begin{proof}}
\newcommand{\epf}{\end{proof}}
\newcommand{\beq}{\begin{equation}}
\newcommand{\eeq}{\end{equation}}
\newcommand{\beqs}{\begin{equation*}}
\newcommand{\eeqs}{\end{equation*}}
\newcommand{\ben}{\begin{enumerate}}
\newcommand{\een}{\end{enumerate}}
\newcommand{\bit}{\begin{itemize}}
\newcommand{\eit}{\end{itemize}}
\title{The Moduli space of Cubic Rational Maps}
\author{Lloyd W. West}
\address{Ph.D Program in Mathematics, The Graduate Center, City University of New York; 365 Fifth Avenue,
New York, NY 10016 U.S.A. }
\email{lwest@gc.cuny.edu}
\thanks{2010 \emph{Mathematics Subject ClassiÞcation.} Primary: 37P45; Secondary: 13A50, 14D22}
\keywords{rational map, moduli space, descent, invariant, covariant, binary forms, field of moduli, field of definition.}
\date{\today}
\begin{document}

\begin{abstract}
We construct the moduli space, $M_d$, of degree $d$ rational maps on $\pp^1$ in terms of invariants of binary forms. We apply this construction to give explicit invariants and equations for $M_3$. 

Using classical invariant theory, we give solutions to the following problems: (1) explicitly construct, from a moduli point $P\in M_d(k)$, a rational map $\phi$ with the given moduli; (2) find a model for $\phi$ over the field of definition (i.e. explicit descent). We work out the method in detail for the cases $d=2,3$. 
\end{abstract}

\maketitle

\section{Introduction}

Let $\phi:\pp^1\to\pp^1$ be a morphism of degree $d$ defined over a field $k$. The iteration of such maps has long been studied from the point of view of complex dynamics where $k=\cc$ (see \cite{Milbook}). More recently many interesting results and conjectures have been made concerning number theoretic aspects of their dynamics (see \cite{SilvermanBook}); that is, dynamics over fields $k$ that are not algebraically closed, especially extensions of $\qq$ and $\qq_p$.

\subsection{The Moduli Space $M_d$}

A rational map $\phi$ of degree $d$ defined over a field $k$, can be written $\phi(X_0,X_1)=[F_0(X_0,X_1):F_1(X_0,X_1)]$, where $F_0(X_0,X_1)$, $F_1(X_0,X_1)$ are homogenous forms of degree $d$ in $k[X_0,X_1]$ with no common factor in $\bar{k}[X_0,X_1]$.  Since the dynamical behavior of the maps is unchanged after performing the same change of coordinates on the source and target spaces, rational maps which are equivalent under the conjugation action of $\Aut(\pp^2)=\pgltwo$ are considered to be isomorphic.

The coarse moduli space, denoted $M_d$, of degree-$d$ rational maps up to conjugacy exists as a scheme over $\zz$ \cite{SilvermanSpace}. For $M_2$ there is an explicit description, due to Milnor \cite{Milnorquad}: namely, there is an isomorphism 
$$
(\sigma_1,\sigma_2): M_2 \to \affine^2_\cc
$$
given by the invariants $\sigma_1$ and $\sigma_2$, the first two symmetric functions in the multipliers of the fixed points of a map. Moreover Silverman \cite{SilvermanSpace} proved that one has an isomorphism $M_2\overset\sim\to\Spec \, \zz[\sigma_1,\sigma_2]\simeq\affine^2_\zz$ as schemes over $\zz$. In other words, over any field the ring of absolute invariants of quadratic rational maps is generated by $\sigma_1$ and $\sigma_2$. In general, it has been shown that $M_d$ is a rational variety for any $d$ \cite{Levy}.

In this paper we construct $M_d$ as an $\SLtwo$-quotient of a space of pairs of binary forms. Such quotients are well studied and there are methods for explicitly computing the ring of invariants. We give an explicit description of $M_3$ in these terms (Theorem \ref{M3}). Then, in sections \ref{construction} and \ref{evendegs}, we show how the inverse problem of constructing a rational map with a given set of invariants, i.e. a map that corresponds to a given point in the moduli space, can also be addressed using classical invariant theory. This method allows us to find the field of definition of a map in terms of its invariants.  

\subsection{Field of Moduli and Fields of Definition}

Fix a base field $k$.  For a rational map $\phi$ of degree $d$ with coefficients in $\bar{k}$, write $\phi^\gamma(X_0,X_1)\,\deff \,\gamma^{-1}\circ\phi\circ\gamma (X_0,X_1)$ for the conjugation action of $\gamma\in \pgltwo(\bar{k})$. Write $[\phi]$ for the point of $M_d(\bar{k})$ corresponding to the equivalence class of maps conjugate to $\phi$. Given a $k$-point $P\in M_d(k)$, there always exists a rational map $\phi$ with coefficients in $\bar{k}$ such that $[\phi]=P$; we say that $\phi$ is a \emph{model} for $P$. In sections \ref{construction} and \ref{evendegs} we present a method (in the odd and even degree cases respectively) for explicitly constructing such a model from the coordinates of the point $P$. 

When $k$ is not algebraically closed, a $k$-point $P\in M_d(k)$ may fail to have a model with coefficients in $k$. Recall the following standard definitions.

\bd\label{defFOM}(\emph{Field of Definition} and \emph{Field of Moduli})

\begin{enumerate} 
\item One says that a field $L$ is a \emph{field of definition} (FOD) for $\phi$ if there exists $\gamma\in \pgltwo(\bar{k})$ such that the coefficients of $\phi^\gamma$ are in $L$. We shall also say that $L$ is a field of definition for a moduli point $P$ if there exists a rational map $\phi$ defined over $L$ such that $[\phi]=P$. 
 \item \noindent Define 
$$
G_\phi:= \{\sigma\in \Gal(\bar{k}/k)\,:\,\sigma(\phi)=\phi^{\gamma_\sigma} \text{ for some } \gamma_\sigma \in \pgltwo(\bar{k})\}
$$
Then the \emph{field of moduli} (FOM) of $\phi$, denoted $k_\phi$, is the fixed field $\bar{k}^{G_\phi}$ of $G_\phi$. One has $[\phi]\in M_d(k_\phi)$.
\end{enumerate} 
\ed
For $d$ even, Silverman \cite{SilvermanFOD} has shown that the FOM is always equal to the FOD. Given a point $P\in M_d(k)$ ($d$ even), one should therefore be able to find a map $\phi$ defined over $k$ such that $[\phi]=P$. In section \ref{evendegs} we give a method for finding such a model that works for a generic odd degree map, and illustrate it explicitly in the case of quadratic maps. (Note that Manes and Yasufuku \cite{ManesYasufuku} have already given an explicit description of models, as well as twists, in the quadratic case.) 
 
When $d$ is odd, the FOD may in general be larger than the FOM. For example, Silverman \cite{SilvermanFOD} notes that 
\[
\psi(x)=i\left(\frac{x-1}{x+1}\right),
\]
has field of definition $\qq(i)$, but field of moduli $\qq$. 

\subsubsection{Cohomological Obstruction}

The obstruction to equality of FOM and FOD can be described cohomologically. This approach was taken in \cite{SilvermanFOD}. Similar methods were applied by Cadoret in \cite{Cadoret} to analyze the corresponding question for Hurwitz moduli spaces.  We first need some results on the automorphism groups of rational maps .

An \emph{automorphism} of a rational map $\phi$  of degree $d\geq 2$ is an element $\gamma\in\mathrm{PGL_2}(\bar{k})$ such that $\phi= \phi^\gamma $. 

The group of all automorphisms of the rational map $\phi$ shall be denoted $\Aut(\phi)$. This group is finite and its order is bounded in terms of $d$ (\cite{SilvermanBook} Prop. 4.65). 

The finite subgroups of $\pgltwo$ are well known:
\begin{prop}\label{Automslist}  

(1) Any  finite subgroup of $\pgltwo (\bar{k})$ is isomorphic to one of the following:
a cyclic group $\frak{C}_r$ of order $r$, a dihedral group $\fD_{2r}$ of order $2r$, the alternating group $\mathfrak{A}_4$, the symmetric group $\mathfrak{S}_4$, or the alternating group $\mathfrak{A}_5$.

(2) Any two finite subgroups of $\pgltwo (\bar{k})$ that are abstractly isomorphic are conjugate in $\pgltwo (\bar{k})$.

(3) Any finite subgroup $A<\mathrm{PGL_2}(\bar{k})$ is conjugate to a group $A_0$ for which both $A_0$ and its normalizer $N(A_0)$ are $G_k$-stable; hence these groups have the structure of a $G_k$-module, as does $Q(A_0)=N(A_0)/A_0$. (A list of such conjugacy class representatives $A_0$ along with $N$ and $Q$ can be found in \cite{Cadoret} Lemma 2.1). 
\end{prop}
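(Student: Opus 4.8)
The plan is to treat the three assertions in turn, noting that (1) and (2) are the classical classification of finite subgroups of $\pgltwo(\bar{k})$ going back to Klein — valid in characteristic zero, or more generally whenever $\mathrm{char}\,k$ does not divide the order of the group in question — while (3) is the arithmetic refinement actually needed in the sequel. For (1), I would exploit the action of a finite subgroup $G<\pgltwo(\bar{k})$ on $\pp^1$. Since every nontrivial $g\in G$ has finite order prime to $\mathrm{char}\,k$, it is diagonalizable, hence conjugate to $z\mapsto\zeta z$ and fixes exactly two points of $\pp^1$. Counting the pairs $(g,P)$ with $g\neq 1$ and $gP=P$ in two ways — once as $2(|G|-1)$, once by summing stabilizer contributions over the orbits of ramification points — yields the relation
\[
2\left(1-\frac{1}{|G|}\right)=\sum_i\left(1-\frac{1}{e_i}\right),
\]
where $e_i$ is the order of the stabilizer of a point in the $i$-th exceptional orbit. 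Since the left side is $<2$ and each summand is $\geq 1/2$, there are at most three orbits, and a short case analysis on the admissible signatures forces the list: two orbits give $\mathfrak{C}_r$; the triples $(2,2,r),(2,3,3),(2,3,4),(2,3,5)$ give $\fD_{2r},\mathfrak{A}_4,\mathfrak{S}_4,\mathfrak{A}_5$ respectively.

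For (2), I would identify the quotient $\pp^1\to\pp^1/G\simeq\pp^1$ as a cover branched over the images of the exceptional orbits with ramification indices $e_i$; the signature is a conjugacy invariant and, by (1), pins down the abstract isomorphism type. For the cyclic and dihedral cases I would put a generator in diagonal form $z\mapsto\zeta z$ (adjoining $z\mapsto 1/z$ in the dihedral case), so that two groups of equal order are directly conjugate. For the three exceptional types I would invoke the rigidity of the triangle signatures $(2,3,3),(2,3,4),(2,3,5)$ — equivalently, the uniqueness up to conjugacy of the faithful two-dimensional projective representation of the binary polyhedral groups — after normalizing three branch points to $0,1,\infty$ via the sharp $3$-transitivity of $\pgltwo$ on $\pp^1$. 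This is the step where I expect the most care to be required.

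For (3), the task is to select within each conjugacy class a representative $A_0$ whose generators have matrix entries lying in a $\Gal(\bar{k}/k)$-stable set (roots of unity together with fixed algebraic constants), so that for every $\sigma\in\Gal(\bar{k}/k)$ the entrywise action carries each generator back into $A_0$; this exhibits $A_0$ as a set-theoretically $G_k$-stable subgroup on which $\sigma$ acts by a group automorphism, whence the $G_k$-module structure, and the same explicit description shows that $N(A_0)$ — and hence $Q(A_0)=N(A_0)/A_0$ — is $G_k$-stable as well. Concretely I would take $A_0=\langle z\mapsto\zeta_r z\rangle$ in the cyclic case, $A_0=\langle z\mapsto\zeta_r z,\ z\mapsto 1/z\rangle$ in the dihedral case, and the symmetric models of $\mathfrak{A}_4,\mathfrak{S}_4,\mathfrak{A}_5$ tabulated in \cite{Cadoret} Lemma 2.1 for the exceptional cases, then verify Galois-stability on generators. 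The main obstacle is thus not the group theory but pinning down these explicit Galois-equivariant models uniformly; once the table of representatives is in hand, the verification of $G_k$-stability is routine.
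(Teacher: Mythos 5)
The paper offers no proof of this proposition: parts (1) and (2) are invoked as Klein's classical classification, and part (3) is delegated entirely to the table of explicit representatives in \cite{Cadoret}, Lemma 2.1. Your proposal instead reconstructs the classical argument, and it is correct in outline. The orbit-counting identity $2(1-1/|G|)=\sum_i(1-1/e_i)$ and the ensuing signature analysis for (1) are the standard route (valid here since the paper works in characteristic zero); for (2) the diagonalization argument disposes of the cyclic and dihedral cases cleanly, while the exceptional cases rest on the rigidity of the signatures $(2,3,3),(2,3,4),(2,3,5)$ --- this is the one step you rightly flag as delicate, since after normalizing the three branch points to $0,1,\infty$ one must still argue that the resulting cover, hence the subgroup, is determined up to conjugacy (equivalently, that the faithful two-dimensional projective representations of $\mathfrak{A}_4,\mathfrak{S}_4,\mathfrak{A}_5$ are unique up to conjugation and outer automorphism); and for (3) your strategy of exhibiting generators with Galois-stable entry sets, e.g.\ $\langle z\mapsto\zeta_r z\rangle$ and $\langle z\mapsto\zeta_r z,\ z\mapsto 1/z\rangle$ with normalizers contained in the $G_k$-stable infinite dihedral group $\mathbb{G}_m\rtimes\mu_2$, is precisely what Cadoret's table encodes, so your verification-on-generators plan amounts to re-deriving that table. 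What your approach buys is self-containedness; what the paper's citation buys is the ready-made list of $(A_0,N(A_0),Q(A_0))$ as $G_k$-modules, which is what the cohomological obstruction argument later in the paper actually consumes.
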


Let $\phi$ be a model over $\bar{k}$ for $P\in M_d(k)$. After conjugation we may assume that $A=\Aut(\phi)$ is $G_k$-stable. From definition \ref{defFOM}, for each $\sigma \in G_k:=\Gal(\bar{k}/k)$ we have an element $\gamma_\sigma\in \pgltwo(\bar{k})$ such that 
\[\sigma(\phi)=\phi^{\gamma_\sigma}
\]
The assumption that $A$ is $G_k$-stable has the following consequence:
\bl (\cite{SilvermanFOD} Lemma 4.2)
\begin{itemize}
\item $\gamma_\sigma\in N(A)$
\item The map 
\begin{eqnarray*}
G_k &\longrightarrow &Q(A)\\
\sigma & \longmapsto & \gamma_\sigma
\end{eqnarray*}
gives a well defined element of $H^1(G_k, Q(A))$, which we shall denote $c_\phi$.
\end{itemize}
\el

From the quotient and inclusion morphisms $N(A)\to Q(A)$ and $N(A)\hookrightarrow \pgltwo(\bar{k})$, we get maps

   \begin{displaymath} 
\xymatrix{ 
 H^1(G_k, N(A))\ar@{->}[d]^{p} \ar@{->}[r]^{i} &H^1(G_k, \pgltwo(\bar{k})) \\ 
H^1(G_k, Q(A))&    }
\end{displaymath}

The \emph{cohomological obstruction} is defined to be the (possibly empty) set $I_k(\phi):=i(p^{-1}(c_\phi))$.

\bp The field $k$ is a field of definition for $\phi$ if and only if $I_k(\phi)$ contains the trivial class.
 \ep
\bpf
(Compare \cite{Cadoret} Prop. 2.4 and \cite{SilvermanFOD} Prop. 4.5)Suppose $\phi$ has a model $\psi$ defined over $k$. Then there exists $\eta\in \pgltwo(\bar{k})$ such that $\psi=\phi^\eta$. From  $\sigma(\psi)=\psi$, we have $\sigma(\phi)=\phi^{\eta\sigma(\eta)^{-1}}$ for all $\sigma\in G_k$. So the class $c_\phi$ can be represented by the cocycle $ \eta\sigma(\eta)^{-1}$, which clearly lifts to a true cocycle in $H^1(G_k, N(A))$ and is a $\pgltwo(\bar{k})$-coboundary.

The converse is covered by \cite{SilvermanFOD} Prop. 4.5.
\epf

When $p$ is surjective, the obstruction to FOM=FOD is given by a set of classes in $H^1(G_k, \pgltwo(\bar{k})) $. Such a class corresponds to a conic curve defined over $k$; the class is trivial precisely when the conic has a $k$-point. In particular, when $A=\{1\}$ the obstruction is given by a single conic curve, which we shall denote $\calC_P$. In section \ref{construction} we give a construction, in terms of the coordinates of the point $P$, of the conic $\calC_P$ in the case $d=3$. In the case where the obstruction vanishes (that is when the conic has a $k$-point), the construction furnishes an explicit model $\phi$ defined over $k$.  On a nonempty open set of $M_3$, the conic and explicit model can be constructed using identities in the ring of covariants associated to the $\SLtwo$-action, due to Clebsch \cite{Clebsch}. We shall call this the \emph{covariants method}. Such identities have been applied by Mestre and others (see \cite{Mestre}, \cite{LR}) to construct hyperelliptic curves of genus 2 and 3 from their moduli; in our case, we shall work with covariants of a system of two even degree binary forms (rather than a single even degree form, as in \cite{Mestre}), which complicates the procedure. To obtain explicit models of even degree rational maps (section \ref{evendegs}), we must deal with systems of binary forms of odd degree (a case that does not arise in \cite{Mestre}).

The covariants method fails on the closed subset, $M^\Aut_d$, of points with nontrivial automorphism group; here one must apply alternative \emph{ad hoc} constructions. In this paper we give explicit formulas only in the case $d=2$ and $d=3$, since for larger $d$, explicit generators for the ring of invariants are not known. However, the covariants method can be applied on a generic set of $M_d$ for any $d$, once the invariants are known.

\section{Invariants and Covariants}

In this section, we apply the classical invariant theory of binary forms to the construction of the moduli spaces $M_d$. We work over a field $k$ of characteristic zero.

Let $\phi: \pp^1_k \to \pp^1_k$ be a rational map. As above, we write $\phi$ in terms of coordinates $X_0,X_1$ on $\pp^1$ as $\phi(X_0,X_1)=[F_0(X_0,X_1):F_1(X_0,X_1)]$, where $F_0(X_0,X_1)$, $F_1(X_0,X_1)$ are homogenous forms of degree $d$ in $k[X_0,X_1]$ with no common factor in $\bar{k}[X_0,X_1]$. The forms $F_0$ and $F_1$ are specified by their coefficients:
\begin{eqnarray*}
F_0 (X_0,X_1)&=&c_1 X_0^d+c_2 X_0^{d-1}X_1+\cdots +c_{d}X_0X_1^{d-1}+c_{d+1} X_1^d\\
F_1 (X_0,X_1)&=&c_{d+2} X_0^d+c_{d+3} X_0^{d-1}X_1+\cdots +c_{2d+1} X_0X_1^{d-1}+c_{2d+2} X_1^d
\end{eqnarray*}
Such expressions can be parametrized by the projective space of the $2d+2$ coefficients, which I shall denote by $P_d=\mathrm{Proj}\, A_d$, where $A_d=k[c_1,\dots,c_{2d+2}]$. For a point of $P_d$ to represent a rational map of degree $d$, the polynomials $F_0$ and $F_1$ must have no common factor. This condition can be expressed by the non-vanishing of the resultant, $\mathrm{Res}(F_0, F_1)$, a polynomial in $A_d$ of degree $2d$, which I shall denote by $\rho_d$. The locus $\Rat_d=P_d - \{\rho_d=0\}$ is a fine moduli space for rational maps of degree $d$ in given coordinates. 

We recall briefly the representation theory of $\SLtwo$ over an algebraically closed field of characteristic zero. All representations are decomposable as direct sums of irreducible representations. Let $V$ be the space of linear binary forms in variables $X_0$ and $X_1$. The group $\SLtwo$ act by substitution 

\begin{equation}\label{action}
    \begin{array}{l r}

  \multirow{2}{*}{$\begin{pmatrix}
  p & q \\
 r &  s
 \end{pmatrix}$:}& X_0\mapsto p X_0+ q X_1\\
   &X_1\mapsto r X_0+ s X_1 \\
   \end{array}
\end{equation}

This action makes $V$ an irreducible representation of $\SLtwo$. Note that $V^*\simeq V$. Up to isomorphism, there is a unique irreducible representation of dimension $n+1$ for each integer $n\geq 0$; namely the space of homogenous binary forms of degree $n$ with the substitution action (\ref{action}); in other words, the $n$-th symmetric power, $S^nV$.

Thinking of the projective line as $\pp^1 =\pp V $, where $V=\Gamma (\pp^1, \ri{\pp^1 }(1))$, we can linearize the action of $\mathrm{PGL_2}$ on $\pp^1$ to the standard two dimensional representation of $ \SLtwo$ on $V$. Then $P_d \cong \pp W_d$, where $W_d=S^dV\otimes V^*\cong S^dV\otimes V$. Under this isomorphism, the map $\phi$ corresponds to the form 
\begin{equation}\label{hform}
h=Y_0F_0(X_0,X_1)+Y_1F_1(X_0,X_1).
\end{equation} 

The calculation of the Hilbert-Mumford criterion in \cite{SilvermanSpace} shows that $\Rat_d$ is contained in the stable locus of the linearized action of $\SLtwo$ on $P_d$, therefore we can form a geometric quotient 
 \[
 M_d=\Rat_d/\SLtwo = \Spec \, R_d
 \]
 where $R_d\,\deff \,H^0(\Rat_d,\ri{\Rat_d})^\SLtwo=((A_d[\frac{1}{\rho}])_0)^\SLtwo$. Since $\rho$ is itself an $\SLtwo$-invariant, we have $R_d\simeq(A_d)^\SLtwo[\frac{1}{\rho}]_0$, where the subscript `$0$' indicates the zero degree part of a graded ring. The ring $R_d$ is the subring of those functions in the coefficients of $\phi$ that remain unchanged under the conjugation action.
 
We have a compactification of $M_d$ by the embedding into 
 \[
 M_d^{ss}=\Proj \; (A_d^\SLtwo).
 \] 
 
\subsection{Expressing rationals maps in terms of binary forms} 

In this section we show how to find the ring of invariants $(A_d)^\SLtwo$ -- and hence find equations for the variety $M_d$ -- by recasting the problem into the classical invariant theory question of finding the simultaneous invariants of two binary forms.

\bp
There is an isomorphism of $\SLtwo$-modules
 \begin{eqnarray*}
\alpha: W_d&\xrightarrow[]{\quad\sim\quad}  &S^{d+1}V\oplus S^{d-1}V
\end{eqnarray*}
A rational map $\phi$, given by homogenous polynomials $F_0(X_0,X_1)$ and $F_1(X_0,X_1)$, corresponds, under $\alpha$, to a pair of binary forms $(f,g)$, where $g$ is the fixed point polynomial 
\[
X_1 F_0(X_0,X_1)- X_0 F_1(X_0,X_1);
\]
and $f$ is the divergence 
\[
\frac{\partial F_0}{\partial X_0}+\frac{\partial F_1}{\partial X_1}
\]
of the map $\mathbb{A}^2\to\mathbb{A}^2$ defined by $(F_0, F_1)$.
\ep

\bpf
One has the Clebsch-Gordan isomorphism of $\SLtwo$-modules $S^dV\otimes V\cong S^{d+1}V\oplus S^{d-1}V$. This is classical, but to obtain the explicit forms, $f$ and $g$, with fairly transparent dynamical significance, we shall modify the usual definitions of the operators in the Clebsch-Gordan series (as given, for example, in \cite{Procesi}). 

To explicitly realize the isomorphism, define the following operators 
\begin{eqnarray*}
\begin{pmatrix}
  \Delta_{xx} & \Delta_{xy} \\
  \Delta_{yx} & \Delta_{yy}
 \end{pmatrix}
 = \begin{pmatrix}
  X_0 & -X_1 \\
 Y_1 &  Y_0
 \end{pmatrix}
 \begin{pmatrix}
 \frac{ \partial}{\partial X_0} &\frac{ \partial}{\partial Y_1} \\
-\frac{ \partial}{\partial X_1} & \frac{ \partial}{\partial Y_0}
 \end{pmatrix}
\end{eqnarray*}
\begin{eqnarray*}
\tilde{\Omega}
 = \det
 \begin{pmatrix}
 \frac{ \partial}{\partial X_0} &\frac{ \partial}{\partial Y_1} \\
-\frac{ \partial}{\partial X_1} & \frac{ \partial}{\partial Y_0}
 \end{pmatrix}
\end{eqnarray*}
and write
\begin{eqnarray*}
 (x,y)
 = \det \begin{pmatrix}
  X_0 & -X_1 \\
 Y_1 &  Y_0
 \end{pmatrix}
\end{eqnarray*}

Then one has the following identity
\begin{eqnarray}\label{capelli}
(x,y)\tilde{\Omega}=  ( \Delta_{xx} +1) \Delta_{yy}-  \Delta_{yx}\Delta_{xy} 
\end{eqnarray}
If $h$ is a homogenous form of degree $m$ in $X_0$ and $X_1$, and degree $n$ in $Y_0$ and $Y_1$, then
\begin{eqnarray*}
\Delta_{xx} h&= &mh\\
\Delta_{yy} h&= &nh
\end{eqnarray*}
So applying the identity (\ref{capelli}) one has
\begin{eqnarray*}
h=\frac{1}{n(m+1)}[(x,y)\tilde{\Omega}  +\Delta_{yx}\Delta_{xy}]h
\end{eqnarray*}
This shows that the maps
\begin{eqnarray*}
\alpha: S^dV\otimes V&\to &S^{d+1}V\oplus S^{d-1}V\\
h &\mapsto& (\Delta_{xy}h,\tilde{\Omega} h)
\end{eqnarray*}
and 
\begin{eqnarray*}
\beta :S^{d+1}V\oplus S^{d-1}V &\to &S^dV\otimes V\\
(g,f)&\mapsto&\frac{1}{d+1}[ \Delta_{yx}g+(x,y)f]
\end{eqnarray*}
are mutually inverse. 

Given a rational map of degree $d$, written as $h=Y_0F_0(X_0,X_1)+Y_1F_1(X_0,X_1)$, we obtain a form $f$ of degree $d-1$ and a form $g$ of degree $d+1$:
\begin{equation}\label{fandg}
\begin{split}
f&=\tilde{\Omega} h \\
g&=\Delta_{xy} h
\end{split}
\end{equation}
It is a simple calculation, using the above-described explicit Clebsch-Gordon maps, to check that these correspond to the divergence and fixed point polynomials as claimed in the theorem statement.
\epf

From now on we shall refer to a rational map of degree $d$ as being specified either by the pair $(F_0,F_1)$ of homogenous forms of degree $d$ or by the pair $(f,g)$ of forms of degree $d-1$ and $d+1$. We shall refer to the moduli point in $M_d$ corresponding to a pair $(f,g)$ with the notation $[f,g]$.  

\begin{rem}
Let $P\in \pp^1(\bar{k})$ be a fixed point and choose coordinates $\xi_0,\xi_1$  for $P$ such that $F_i(\xi_0,\xi_1)=\xi_i$ for $i=0,1$. Then the multiplier of the fixed point $P$ is given by $f(\xi_0,\xi_1)-d$. 
\end{rem}
\begin{rem}
By work of Bogomolov and Katsylo, it is known that $\SLtwo$-quotients of spaces of binary forms are rational varieties. This gives an alternative means for proving the rationality of $M_d$.
\end{rem}

\subsection{Binary Forms, Invariants and Covariants}\label{binforms}

We briefly recall the basics of classical invariant theory (for a full account see the references \cite{Clebsch} and \cite{GraceYoung}).

Given an $\SLtwo$-variety, $W$, an $\SLtwo$-equivariant morphism $H: W\to S^eV$ is called a \emph{covariant} of order $e$. When $W$ is a linear representation of $\SLtwo$ we can write $W\cong \bigoplus_{1\leq \ell\leq n} V_{d_\ell}$ for some $n$ and $d_\ell\in \zz_{\geq 0}$. So a point of $W$ may be given as system of $n$ binary forms, $(f_\ell)_{1\leq \ell\leq n}$, where
\begin{equation*}
f_\ell=\sum_{i=0}^{d_\ell}c^{(\ell)}_i X_0^{d_1-i}X_1^{i}.
\end{equation*}

Elements $\begin{pmatrix}
  p & q \\
 r &  s
 \end{pmatrix}\in \SLtwo$ act on binary forms: if $f(X_0,X_1)$ is a form of degree $d$ with coefficients $\mathbf{c}=(c_1,\dots,c_d)$, the result of substituting $X_0=p X_0^\prime+ q X_1^\prime$ and $X_1=r X_0^\prime+ s X_1^\prime$ is a new form in $X_0^\prime$ and $ X_1^\prime$, whose coefficients we shall denote by $\mathbf{c^\prime}=(c_1^\prime,\dots,c_d^\prime)$.
 
Concretely, a \emph{covariant} of $W$ is given by a form 
\[
H((\mathbf{c}^{(\ell)})_{1\leq \ell\leq n},X_0,X_1)
\]
such that 
\begin{equation*}
H((\mathbf{c}^{\prime(\ell)})_{1\leq \ell\leq n},X^\prime_0,X^\prime_1)= H((\mathbf{c}^{(\ell)})_{1\leq \ell\leq n},X_0,X_1)
\end{equation*}
Note that the forms $f_\ell$ are themselves covariants. 

Such a covariant is homogenous in the two sets of variables $(X_0,X_1)$ and $(\mathbf{c}^{(\ell)})_{1\leq \ell\leq n}$ separately. The degree of $H$ in $(X_0,X_1)$ is the \emph{order} of the covariant.  The total degree in the variables $(\mathbf{c}^{(\ell)})_{1\leq \ell\leq n}$ shall be called simply the \emph{degree} of $H$. A covariant of order 0 is an \emph{invariant}. The quotient of two invariants of the same degree is called an \emph{absolute invariant}. 

The $\SLtwo$-covariants (resp. invariants) form a graded $k$-algebra $\mathrm{Cov}(W)$ (resp. $\mathrm{Inv}(W)$), which, by a famous result of Gordon and Hilbert, is always finitely generated; a set of generators is called a set of \emph{basic covariants} (resp. \emph{basic invariants}).

\subsection{Transvectants}


The \emph{omega process} with respect to variables $\mathbf{Z}^{(p)}=(Z_0^{(p)},Z^{(p)}_1)$ and  $\mathbf{Z}^{(q)}=(Z^{(q)}_0,Z^{(q)}_1)$ is defined as
\[
\Omega_{pq}=\frac{\partial^2}{\partial Z^{(p)}_0\partial Z^{(q)}_1}-\frac{\partial^2}{\partial Z^{(q)}_0\partial Z^{(p)}_1}
\]

Given two binary forms $F$ and $G$ in $X_0,X_1$ of orders $m$ and $n$ respectively, one defines the $r$-\emph{th transvectant} (also called the $r$-\emph{th \"uberschiebung}) as
\begin{eqnarray*}
 \scriptstyle (F,G)_r\;\deff\;\frac{(n-r)!(m-r)!}{n!m!} [(\Omega_{12})^r \{F(Z^{(1)}_0,Z^{(1)}_1)G(Z^{(2)}_0,Z^{(2)}_1)\}]\big{|}_{\normalsize X_0=Z^{(1)}_0=Z^{(2)}_0,\;X_1=Z^{(1)}_1=Z^{(2)}_1}
\end{eqnarray*}
\normalsize

If $F$ and $G$ are covariants of degree $d$ and $e$ respectively, then $(F,G)_k$ is a covariant of order $m+n-2k$ and degree $e+d$. 

In fact, any covariant of a system of binary forms can be expressed as a linear combination of iterated transvectants of the ground forms. This fact is known as the First Fundamental Theorem of Invariant Theory. 

We shall also use the \emph{generalized transvectant} (see \cite{GraceYoung} $\S$ 81). Given a sequence of distinct pairs $(p_1,q_1),\dots, (p_k,q_k)$, where $p_i,q_i\in \{1,\dots,m\}$ and $p_i\neq q_i$, define 
 \[
 \kappa_\ell  \; \deff \;\sum_{i\;:\; \ell\in\{p_i,q_i\}} r_i
 \]
Then one can define the generalized transvectant of the $m$ forms $G_1,\dots,G_m$ of orders $s_1,\dots,s_m$ as
\begin{equation*}
\begin{split}
&(G_{p_1},G_{q_1})_{r_1}(G_{p_2},G_{q_2})_{r_2}\cdots(G_{p_k},G_{q_k})_{r_k}\;\deff\; \\
&\prod_{\ell=1}^n \frac{(s_\ell-\kappa_\ell)!}{s_\ell !} \cdot \left[\prod_{i=1}^k (\Omega_{p_i q_i})^{r_i} \cdot \prod_{\ell=1}^{m} G(Z^{(\ell)}_0,Z^{(\ell)}_1)\right]\bigg|_{X_0=Z^{(\ell)}_0,\;X_1=Z^{(\ell)}_1 \; 1\leq\ell\leq m}
\end{split}
\end{equation*}
\newline

\subsection{Typical Presentations}\label{relations}
In the sequel we shall follow a method similar to that used by Mestre in \cite{Mestre}. This method depends on Clebsch's construction of `typical presentations' (\emph{`typische Darstellungen'}) for systems of binary forms (\cite{Clebsch} $\S$81).

\subsubsection{Systems where at least one of the forms is of odd order}\label{oddorder}
 
Let $f_1, \dots, f_n$ be a system of binary forms that contains at least one form of odd order. Then one can prove (\cite{Clebsch} $\S$90)
that there exist two linear covariants, $u_0$ and $u_1$ say, whose resultant does not vanish identically. 

Given a n arbitrary form of the system, $f$ say, write it in symbolic notation\footnote{`Symbolic notation' is a compact calculus for manipulating covariants of binary forms; see \cite{GraceYoung} or \cite{Clebsch} $\S$15 for a classical account.} as $f=a_x^m$.  Then from the basic symbolic identities one has
\begin{equation}\label{linrel}
(u_1u_0)_1 \cdot a_x=(u_1f) \cdot u_0 - (u_0f) \cdot u_1
\end{equation}
On taking the $m$-th symbolic power this yields
\begin{equation}\label{genrel}
\begin{split}
((u_1,u_0&)_1)^m \cdot f=\\
& \sum_{i_1,\dots i_m\in \ff_2}  (-1)^{i_1+\cdots+i_m} \cdot(u_{i_1},f)_1\cdots(u_{i_m},f)_1 \cdot  u_{i_1 +1}\cdots u_{i_m +1}
\end{split}
\end{equation}
Note that in this expression the products of transvectants should be read as symbols for generalized transvectants, not as the mere product of the simple transvectants.  

In (\ref{genrel}) we have expressed the form $f$ as a binary form in new variables $u_0$ and $u_1$ such that the coefficients are invariants and the change of variables is given by covariants; this is what is meant by `typical presentation' of the form $f$. 

 \subsubsection{Systems of forms of even order}\label{subsec:evenorder}

In the case that all of the original forms $f_\ell$ are of even order, one does not dispose of independent linear covariants. One can however prove (\cite{Clebsch} $\S$102) that there exists a pair of \emph{quadratic} covariants, $u_1$ and $u_2$ say, whose resultant does not vanish identically. Given these one can always find a third quadratic covariant $u_3$ such that all three are generically linearly independent (for example, one can take the first transvectant of $u_1$ and $u_2$).

The three covariants $u_1,u_2,u_3$ can be treated as a system of quadratic forms. In a system of quadratic forms one has three kinds of basic covariants; namely the covariants of order two
\begin{equation*}
\xi_i = (u_j, u_k )_1\quad \text{for cyclic permutations } (ijk) \text{ of } (123)
\end{equation*}
together with invariants
\begin{equation*}
C_{ij} = (u_i, u_j )_2 \quad \text{for } 1\leq i,j \leq 3
\end{equation*}
and
\begin{equation*}
r=-(u_1,u_2)_1 (u_1,u_3)_1 (u_2,u_3)_1
\end{equation*}
Note that $r$ is the determinant of $u_i$ in the basis $X_0^2, \,X_0X_1,\,X_1^2$. Therefore the forms $u_i$ are linearly independent if and only if $r\neq 0$. 

In (\cite{Clebsch} $\S$103) the following formulas are deduced, from which one can find a typical representative for the system.
If $F$ is any additional quadratic form one has
\begin{equation}\label{rf}
r F= (F,u_1)_2 \xi_1+(F,u_2)_2 \xi_2+(F,u_3)_2 \xi_3
\end{equation}
One obtains a similar relation for any form $G$ of even order $2n$, which we write using symbolic notation as
\begin{equation}\label{rg}
r^n G = ((G,u_1)_2 \xi_1+(G,u_2)_2 \xi_2+(G,u_3)_2 \xi_3)^{(n)}\end{equation}
Here the power `$^{(n)}$' should be interpreted symbolically; that is, products of transvectants should be read as symbols for generalized transvectants, not as the mere product of the simple transvectants.  If $G$ is one of the original forms $f_\ell$ of the system, then (\ref{rg}) gives us a typical presentation for that form. 

As in \cite{Clebsch} ($\S$58) one can prove various relations between the covariants. Firstly, one has 
\begin{equation}\label{uixi}
u_1\xi_1+u_2\xi_2+u_3\xi_3=0
\end{equation}
Applying (\ref{rf}) with $u_i$ substituted for $F$ and then substituting these expressions for $u_i$ into (\ref{uixi}), one obtains
\begin{equation}
\sum_{1\leq i,j\leq 3} C_{ij}\xi_i\xi_j = 0
\end{equation}
From this, one can find a relation between the invariants:
\begin{equation}\label{M3rel}
2r^2=\det (C_{ij})
\end{equation}
\begin{rem}
Proofs for the identities stated in this section can all be found in \cite{Clebsch} or \cite{LR}.
\end{rem}

\section{Moduli Space of Cubic Rational Maps}\label{Cubic}

\subsection{Invariants, Covariants and Relations}

The generators of the algebra of covariants for systems of forms of low degree were calculated in the nineteenth century. We now list the basic covariants in the case relevant to the moduli space of cubic maps; namely the system of one quadratic form $f$ and one quartic form $g$ . In this case there are 18 basic covariants, of which six are invariants (see \cite{GraceYoung} $\S$ 143, \cite{Clebsch} $\S$60). For ease of notation, define $H=(g,g)_2$ and $T=(g,H)_1$.

 \begin{table}[!h]
  \begin{small}
\begin{center}
    \begin{tabular}{| c||c |c | c| c|c|c |}
    
     \hline
     \multicolumn{1}{|r||}{\quad \quad Deg.}&\multirow{2}{*}{\quad1\quad} & \multirow{2}{*}{\quad 2 \quad}&\multirow{2}{*}{\quad 3 \quad} &\multirow{2}{*}{\quad 4 \quad}  &\multirow{2}{*}{\quad 5 \quad }  &\multirow{2}{*}{\quad 6 \quad}  \\ 
     \multicolumn{1}{| l ||}{Ord.\quad\quad} & & & & & & \\
   \hline \hline
    \multirow{2}{*}{0}&  \multirow{2}{*}{--} & $(g,g)_4$& $(H,g)_4$ & \multirow{2}{*}{$(H,f^2)_4$} & \multirow{2}{*}{--} &\multirow{2}{*}{$(T,f^3)_6$} \\

  & &$(f,f)_2$ &$(g,f^2)_4$& & &\\
  \hline
  \multirow{2}{*}{2}&  \multirow{2}{*}{$f$} & \multirow{2}{*}{$(g,f)_2$}& $(H,f)_2$ & \multirow{2}{*}{$(H,f^2)_3$} & \multirow{2}{*}{$(T,f^2)_4$} &\multirow{2}{*}{--} \\
  & & &$(g,f^2)_3$& & &\\
  \hline
  \multirow{2}{*}{4}&  \multirow{2}{*}{$g$} &$(g,f)_1$& \multirow{2}{*}{$(H,f)_1$} & \multirow{2}{*}{$(T,f)_2$} & \multirow{2}{*}{--} &\multirow{2}{*}{--} \\
  & &$(g,g)_2$ && & &\\
   \hline
  \multirow{2}{*}{6}&\multirow{2}{*}{--} &\multirow{2}{*}{--}&\multirow{2}{*}{$(g,H)_1$ }&\multirow{2}{*}{--} &\multirow{2}{*}{ --}&\multirow{2}{*}{--}\\
  & & & & & & \\
   \hline
   \end{tabular} 
   \end{center}\end{small}
   \caption{Covariants and Invariants for quadratic and quartic $(f,g)$}
  \label{tab:Cov}
\end{table} 

The six basic invariants  --  denoted $i,j,a,b,c,d$ -- are bihomogenous in the coefficients of $f$ and $g$ respectively. The multidegrees are recorded in Table \ref{tab:Invariants}.
\begin{center}
\begin{table}[h]
  \centering
    \begin{tabular}{ c  c  l  c c}
    \hline
    \multicolumn{3}{c}{Invariants}  & Degree & Multidegree \\ \hline
    $d$ & $=$ & $(f,f)_2$ & 2 & (2,0) \\ 
    $i$ & $=$ & $(g,g)_4$ & 2 & (0,2) \\ 
    $j$ & $=$ & $(H,g)_4$ & 3 & (0,3) \\ 
    $a$ & $=$ & $(g,f^2)_4$ & 3 & (2,1) \\ 
    $b$ & $=$ & $(H,f^2)_4$ & 4 & (2,2) \\ 
    $c$ & $=$ & $(T,f^3)_6$ & 6 & (3,3) \\ 
    \hline
    \end{tabular}
\caption{Invariants for quadratic and quartic $(f,g)$}\label{tab:Invariants}
\end{table}
\end{center}
For later use we define the following invariant 
\begin{equation}
\tilde{c}=\frac{1}{6} d^{2} i - \frac{1}{2} a^{2} + \frac{1}{2} d b
\end{equation}
The resultant $\rho$ of the forms $F_0$ and $F_1$ can be expressed in terms of the basic invariants as
\begin{equation}
\rho=\frac{1}{8}i^3 + \frac{1}{384} i d^2 - \frac{3}{4} j^2 - \frac{3}{16} j a + \frac{1}{256} a^2 + \frac{3}{16} i b - \frac{1}{64} d b - \frac{1}{8} c
\end{equation}

\begin{rem}
Multiplying all coefficients of a rational map $\phi$ by a nonzero factor $\alpha$ does not change the rational map; however the invariants of degree $n$ change by a factor of $\alpha^n$. Conversely any two rational maps $\phi$ and $\psi$ are conjugate over $\bar{k}$ if and only if there is an element $\alpha \in \bar{k}$ such that 
\[
(d_\phi, i_\phi,j_\phi,a_\phi,b_\phi,c_\phi)= (\alpha^2 d_\psi, \alpha^2 i_\psi, \alpha^3 j_\psi, \alpha^3 a_\psi, \alpha^4 b_\psi, \alpha^6 c_\psi);
\]
or equivalently if and only if $(d_\phi, i_\phi,j_\phi,a_\phi,b_\phi,c_\phi)$ and $(d_\psi, i_\psi,j_\psi,a_\psi,b_\psi,c_\psi)$ represent the same point of the weighted projective space $\pp(2,2,3,3,4,6)$, where, for example, $d_\phi$ means the value of the invariant $d$ evaluated at the coefficients of $\phi$.

\end{rem}

In the next section we shall use various covariants of $f$ and $g$. They are listed in Table \ref{tab:Covariants}.

\begin{table}[h]
  \centering
\begin{center}
    \begin{tabular}{ c  c  l  c c c}
    \hline
    \multicolumn{3}{c}{Covariants}& Order  & Degree & \\ \hline
    $H$ & $=$ & $(g,g)_2$ & 4 & 2&\\
    $T$ & $=$ & $(g,H)_1$ & 6 & 3&\\
    $u_1$ & $=$ & $f$ & 2 & 1 &\\  
    $u_2$ & $=$ & $(g,f)_2$ & 2 & 2& \\  
    $u_3$ & $=$ & $(H,f)_2$ & 2 & 3& \\ 
    $\xi_1$ & $=$ & $(u_2,u_3)_1$ & 2 & 9& \\ 
    $\xi_2$ & $=$ & $(u_3,u_1)_1$ & 2 & 8& \\ 
    $\xi_3$ & $=$ & $(u_1,u_2)_1$ & 2 & 7& \\ 
    $C_{ij}$ & $=$ & $(u_i,u_j)_2$ & 0 & $i+j$ & for $1\leq i,j \leq 3$\\ 
    $A_{i}$ & $=$ & $(f,u_i)_2$ & 0 & $1+i$ &  for $1\leq i \leq 3$\\  
    $B_{ij}$ & $=$ & $(g,u_i)_2 (g,u_j)_2$ & 0 & $1+i+j$ & for $1\leq i,j \leq 3$\\
    $r$ & $=$ & $-(u_1,u_2)_1 (u_1,u_3)_1 (u_2,u_3)_1$ & 0 & 6 & \\
    \hline
    \end{tabular}\end{center}\caption{Covariants for cubic rational map}\label{tab:Covariants}
\end{table}
\subsection{Relations}\label{relations}
There is a single relation among the six basic invariants of the cubic rational map; namely the relation
\begin{equation}\label{M3rel2}
2r^2=\det (C_{ij})
\end{equation}
coming from the relation (\ref{M3rel}) mentioned in section \ref{subsec:evenorder}.
One can calculate the $C_{ij}$ in terms of the basic invariants (see \cite{Clebsch} $\S$ 60):
\begin{eqnarray*}
\begin{array}{llllll} 
C_{11} = d ,&& \quad\quad C_{12}  =  a ,&& \quad\quad C_{13}  =  b\\
\\
C_{22} = b+\frac{1}{3}id ,&& \quad\quad C_{23}  =  \frac{1}{6}ia+\frac{1}{3}jd ,&& \quad\quad C_{33}  =  \frac{1}{3}ja-\frac{1}{6}ib+\frac{1}{18}i^2d
\end{array}
\end{eqnarray*}

Furthermore, one can check that $r=c$; so the relation (\ref{M3rel2}) becomes
\begin{equation}\label{relfull}
2 c^{2}=\frac{1}{54} d^{3} i^{3} - \frac{1}{9} d^{3} j^{2} - \frac{1}{12} d i^{2} a^{2} - \frac{1}{3} j a^{3} + d j a b + \frac{1}{2} i a^{2} b - \frac{1}{2} d i b^{2} -  b^{3}
\end{equation}

As a corollary of these classical invariant theory computations, we have the following description of $M_3$.
\bt\label{M3} 
The space $M_3$ is isomorphic to the 4-dimensional variety in $\pp(2,2,3,3,4,6)$ determined by the relation (\ref{relfull}) and the non-vanishing of the resultant $\rho$.
\et



\subsection{Automorphisms}\label{autos}

In this section we determine the locus of maps having automorphism group isomorphic to $A$, for each of the possible groups $A$ listed in Proposition \ref{Automslist}.

If a rational map $\phi$ corresponds to the pair of binary forms $f,\,g$ then, for each $\gamma \in \Aut(\phi)$, we must have
\begin{eqnarray*}
f\circ \gamma &=& \chi(\gamma) f\\
g\circ \gamma &= &\chi(\gamma) g
\end{eqnarray*}
for some $\chi(\gamma) \in \bar{k}^\times$. It is easy to see that $\chi: G\to \bar{k}^\times$ must  be a character of $G$. In other words $f$ and $g$ must be \emph{relative invariants} with the same character. Therefore, to find representatives of the $\bar{k}$-conjugacy classes of maps with a given automorphism group it is enough to find relative invariants. Any relative invariant is a polynomial in the so-called \emph{Grundformen}; that is forms whose set of zeros is equal to an exceptional orbit of the action of $G$ on $\pp^1(\bar{k})$ (i.e. an orbit with non-trivial stabilizer). (This approach is used in \cite{McMullen} to find maps with icosahedral automorphism group.)

By calculating Grundformen for each finite subgroup $A_0< \mathrm{PGL_2}(\bar{k})$ from Proposition \ref{Automslist}, we find normal forms for the maps whose automorphism group contains $A_0$; they are listed in Table \ref{tab:Auts}. Each such normal form corresponds to a locus in the moduli space $M_3$. We find a defining ideal for closure of this locus in $M_3$ by computing the invariants of the normal form in terms of the parameters $p_i$ and then eliminating the $p_i$. In fact the loci are themselves closed; this is a corollary of the construction of rational maps from their moduli given in section \ref{construction}. In this way, we have a stratification of the moduli space by automorphism group; the organization of the strata is given in Figure \ref{fig:strata}. Note that we do not list normal forms of binary forms that do not correspond to rational maps, i.e. those for which the resultant vanishes.  There are no cubic rational maps with automorphism group $\mathfrak{S}_4$ or $\mathfrak{S}_5$ since the Grundformen for these groups have degree at least 6 and 11 respectively. 

 \begin{table}[!h]
  \begin{small}
\begin{center}
    \begin{tabular}{ |c|c |c  c| c| c|}
    \hline
    
     \multirow{2}{*}{Group}&\multirow{2}{*}{Name} & \multicolumn{2}{ c| }{Normal Form} &\multirow{2}{*}{Ideal}  &\multirow{2}{*}{Dimension} \\ \cline{3-4}
     & &$f$ & $g$& & \\
   \hline \hline
    \multirow{2}{*}{$\zz/2\zz$}& $\mathsf{\mathsf{C_2^{(1)}}}$&$s X_0 X_1$ &$t X_0^4 +uX_0^2X_1^2+vX_1^4$& $ (c,\tilde{c})$ & 2\\       
     &$\mathsf{C_2^{(2)}}$&$s X_0^2+t X_1^2$ &$X_0 X_1(uX_0^2+v X_1^2)$&$(a,j)$ & 2 \\
    \hline
   $\zz/3\zz$& $\mathsf{C}_3$&$s X_0^2$ &$ X_1(tX_0^3+u X_1^3)$ &$(d,i,b)$& 1\\
   \hline
   $\zz/4\zz$& \multicolumn{5}{ c| }{$(\zz/4\zz)\leq \Aut(\phi)\Longrightarrow   \Aut(\phi)=\fD_8   $; see $\mathsf{D}_8$ below}\\
   \hline
   \multirow{2}{*}{$\fD_4$} & $\mathsf{D_4^{(1)}}$ & $s X_0X_1$ &$ t(X_0^4-X_1^4)$&$(a,j,c,\tilde{c})$& 1\\   
    &$\mathsf{D_4^{(2)}}$&$0$ &$ s(X_0^4+X_1^4)+tX_0^2X_1^2$&$(d,a,b,c)$&1\\
    \hline    
  $\fD_8$& $\mathsf{D}_8$&$0$ &$ s(X_0^4+X_1^4)$&$(d,j,a,b,c)$&0\\
  \hline
  $\mathfrak{A}_4$& $\mathsf{A}_4$&$0$ &$X_0^4-2\sqrt{-3}X_0^2X_1^2+X_1^4$ & $ (c,b,a,i,d)$&0  \\  
  \hline

   \end{tabular} 
   \end{center}\end{small}
   \caption{Loci with non-trivial automorphism group in $M_3$}
  \label{tab:Auts}
\end{table} 

\begin{figure}[h!]
  \centering
   \begin{displaymath} 
\xymatrix{ 
\text{dimension 2}& & & \mathsf{C_2^{(1)}}\ar@{-}[d] \ar@{-}[drr] & &  \mathsf{C_2^{(2)}}\ar@{-}[d]  \\ 
\text{dimension 1}& \mathsf{C}_3 \ar@{-}[dr]& & \mathsf{D_4^{(2)}}\ar@{-}[dl] \ar@{-}[dr] & &  \mathsf{D_4^{(1)}}\ar@{-}[dl]   \\ 
\text{dimension 0}& &\mathsf{A}_4 &  & \mathsf{D}_8& }
\end{displaymath} 

  \caption{Organization of the strata}
    \label{fig:strata}
\end{figure}
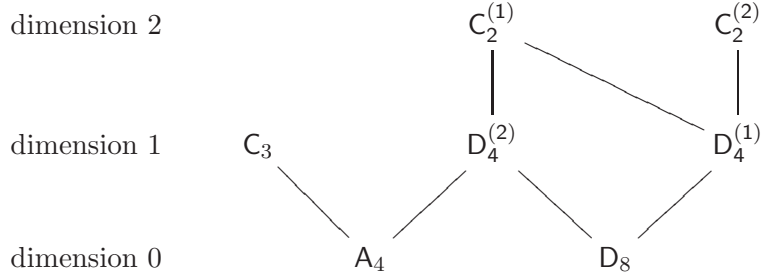

The singular locus is related to nontrivial automorphisms as follows.
\bp
The variety $M_3$ is singular. The locus of points in $M_3$ corresponding to maps with non-trivial automorphism group, which we shall denote $M_3^{\Aut}$, is equal to the singular locus $M_3^{sing}$. The locus $\mathsf{C_2^{(1)}}$ is precisely the singular locus of the affine quasi-cone over $M_3$. 
\ep
\bpf
This is easy to verify computationally from the explicit description of the invariant ring in terms of generators and relations.
\epf


\section{Constructing Cubic Rationals Maps from their Moduli}\label{construction}

Given a point $P\in M_3(k)$ there is always a rational map $\phi$ defined over $\bar{k}$ corresponding to the given point. In this section we explicitly construct such a model $\phi$ and investigate when it can be defined over $k$.

We shall see that, so long as $P$ is not in the locus $\mathsf{C_2^{(1)}}$, any model of $P$ has two independent quadratic covariants. In this case, the covariants formulas of section \ref{subsec:evenorder} afford a method for constructing a model $\phi$ with $[\phi]=P$. In  section \ref{indep} we explain this construction in full.

On the locus $\mathsf{C_2^{(1)}}$, one cannot use the covariants method. In section \ref{dep} we give a different construction that works in this case. 

\subsection{Weighted projective coordinates}

A point is $P\in M_3(\bar{k})$ is defined over $k$ if all the absolute invariants are in $k$, or equivalently if $P^\sigma=P$ for all $\sigma\in \Gal (\bar{k}/k)$. By the construction of $M_3$ in terms of relative invariants, we have an embedding $M_3\hookrightarrow \pp(2,2,3,3,4,6)$. Recall that a $\bar{k}$-point $P$ of a weighted projective space $\pp(w_0,\dots ,w_n)$ can be represented by tuples $(x_0,\dots x_n)\in \bar{k}^{n+1}$; another such tuple $(x^\prime_0,\dots x^\prime_n)$ represents the same point if and only if there exists $\alpha \in \mathbb{G}_m(\bar{k})$ such that $\alpha^{w_i} x_i=x^\prime_i$ for $0\leq i\leq n$. Such an $n+1$-tuple will be called a set of \emph{weighted projective coordinates} for the point $P$. We note that by Hilbert's Theorem 90, one can choose weighted projective coordinates with values in $k$ to represent any point $P\in\pp(w_0,\dots ,w_n)$. In particular this applies to points $P\in M_3(k)$. 

\begin{lem}\label{hilb90}
If $P\in M_3(k)$ is a point defined over $k$, then there exist values $\mathbf{d},\mathbf{i},\mathbf{j},\mathbf{a},\mathbf{b},\mathbf{c}\in k$ of the invariants such that $P=[\mathbf{d}:\mathbf{i}:\mathbf{j}:\mathbf{a}:\mathbf{b}:\mathbf{c}]$.
\end{lem}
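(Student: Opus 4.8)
The plan is to prove the more general fact that \emph{any} $k$-rational point of a weighted projective space $\pp(w_0,\dots,w_n)$ admits a set of weighted projective coordinates lying in $k$, and then specialize to the embedding $M_3\hookrightarrow\pp(2,2,3,3,4,6)$ with weights $(w_0,\dots,w_5)=(2,2,3,3,4,6)$. Morally this is just Hilbert's Theorem 90 applied to $H^1(G_k,\bar{k}^\times)$, but one must be careful because the $\mathbb{G}_m$-action defining the weighted projective space is not free, so the naive descent cochain is only well defined up to roots of unity.

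First I would fix an arbitrary lift $x=(x_0,\dots,x_n)\in\bar{k}^{n+1}\setminus\{0\}$ representing $P$ and put $S=\{\,i:x_i\neq 0\,\}$. Since $P$ is defined over $k$, for each $\sigma\in G_k$ the tuple $\sigma(x)$ again represents $P$, so there is $\alpha_\sigma\in\bar{k}^\times$ with $\sigma(x_i)=\alpha_\sigma^{w_i}x_i$ for all $i$. The element $\alpha_\sigma$ is pinned down only up to a root of unity $\zeta$ with $\zeta^{w_i}=1$ for all $i\in S$, equivalently with $\zeta^{e}=1$ where $e:=\gcd(w_i:i\in S)$; consequently the assignment $\sigma\mapsto\alpha_\sigma$ need not be a genuine cocycle, which is the only delicate point of the argument.

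The key step is to eliminate this ambiguity by passing to $e$-th powers. Writing $e=\sum_{i\in S}m_i w_i$ with $m_i\in\zz$, the quantity $\gamma_\sigma:=\prod_{i\in S}\bigl(\sigma(x_i)/x_i\bigr)^{m_i}=\alpha_\sigma^{e}$ is independent of the choice of $\alpha_\sigma$, and depends only on the restriction of $\sigma$ to a finite extension of $k$, so it defines a continuous map $G_k\to\bar{k}^\times$. A short computation from $\sigma\tau(x)=\sigma(\alpha_\tau\cdot x)$ shows that $\sigma\mapsto\alpha_\sigma$ satisfies the cocycle relation modulo the roots of unity described above; raising to the $e$-th power annihilates those roots of unity, so $\gamma$ is an honest $1$-cocycle. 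By Hilbert's Theorem 90 we have $H^1(G_k,\bar{k}^\times)=0$, whence $\gamma_\sigma=\sigma(\delta)\delta^{-1}$ for some $\delta\in\bar{k}^\times$.

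Finally, since $\bar{k}$ is algebraically closed I would choose $\beta\in\bar{k}^\times$ with $\beta^{e}=\delta$, so that $\alpha_\sigma^{e}=(\sigma(\beta)\beta^{-1})^{e}$ and hence $\alpha_\sigma=\eta_\sigma\,\sigma(\beta)\beta^{-1}$ for some $e$-th root of unity $\eta_\sigma$. Setting $y_i:=\beta^{-w_i}x_i$, one computes $\sigma(y_i)=\eta_\sigma^{w_i}y_i=y_i$ for $i\in S$, using that $e\mid w_i$, while $y_i=0$ for $i\notin S$; thus every $y_i$ lies in $\bar{k}^{G_k}=k$ and the tuple $(y_i)$, differing from $(x_i)$ only by the $\mathbb{G}_m$-action of $\beta^{-1}$, still represents $P$. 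Applying this to $P\in M_3(k)\subset\pp(2,2,3,3,4,6)(k)$ produces the desired values $\mathbf{d},\mathbf{i},\mathbf{j},\mathbf{a},\mathbf{b},\mathbf{c}\in k$ with $P=[\mathbf{d}:\mathbf{i}:\mathbf{j}:\mathbf{a}:\mathbf{b}:\mathbf{c}]$. The main obstacle is precisely the failure of the naive cochain $\sigma\mapsto\alpha_\sigma$ to be a cocycle; the $e$-th power trick together with the divisibility of $\bar{k}^\times$ is exactly what repairs it.
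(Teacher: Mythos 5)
Your proof is correct and follows the same route as the paper, whose entire proof of this lemma is the two-word citation ``Hilbert's Theorem 90.'' The $e$-th power device you use to repair the ambiguity in $\sigma\mapsto\alpha_\sigma$ (caused by the non-free $\mathbb{G}_m$-action) into an honest continuous cocycle, and the final extraction of an $e$-th root $\beta$ of $\delta$, are exactly the details the paper leaves implicit, and your verification of them is sound.
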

\begin{proof}
Hilbert's Theorem 90.  
\end{proof}

\subsection{The covariants method}\label{indep}

We start with the following observation. 
\bp  A pair $(f,g)$ of forms of degree 2 and 4 respectively \emph{fails} to have two independent quadratic covariants if and only if both $c$ and $\tilde{c}$ vanish. 
\ep
\bpf
\cite{Clebsch} $\S\S$107-108.
\epf

In the case $c\neq0$ the relevant quadratic covariants were given in Table \ref{tab:Covariants}. The associated invariants, $A_i, B_{ij}$ and $C_{ij}$, were defined in section \ref{relations} and the expressions for $C_{ij}$ in terms of the basic invariants were listed. For completeness, we list here the expressions for  $A_i$ and $ B_{ij}$.
\begin{small}
\begin{eqnarray*}
\begin{array}{llllll} 
A_1= d,&& \quad A_2  =  a ,&& \quad A_3  =  b\\
B_{11} = a ,&& \quad B_{12}  =  b+\frac{1}{3}id ,&& \quad B_{13}  =  \frac{1}{6}ia+\frac{1}{3}jd\\
B_{22} = \frac{1}{2}ia+\frac{1}{3}jd,&& \quad B_{23}  =  \frac{1}{3}ja-\frac{1}{6}ib+\frac{1}{18}i^2d ,&& \quad B_{33}  =  \frac{1}{3}jb-\frac{1}{36}i^2a+\frac{1}{18}dij
\end{array}
\end{eqnarray*}
\end{small}

In the following theorem we use the identities from section \ref{subsec:evenorder} to construct a model $\phi$ corresponding to a given moduli point $P$; we also determine when FOM=FOD.

\begin{thm}\label{covariantstheorem}
Let $P\in M_3(k)$ be a moduli point outside of the locus \mbox{$\{c=0\}$.} Choose weighted projective coordinates \mbox{$P=[\mathbf{d}:\mathbf{i}:\mathbf{j}:\mathbf{a}:\mathbf{b}:\mathbf{c}]$} with values in $k$; whence we also have values $\mathbf{A}_i, \mathbf{B}_{ij}, \mathbf{C}_{ij} \in k$.

Let $\calC_P $ be the conic in $\pp^2$ with equation
 \begin{equation*}
\sum_{1\leq i,j\leq 3} \mathbf{C}_{ij} x_i x_j = 0
\end{equation*}

\noindent Then 
\begin{center}
$\calC_P(k)\neq\emptyset \quad\Longrightarrow \quad $ there exists a model $(f,g)$ of $P$ defined over $k$.
\end{center}
\vspace{1em}
\noindent Explicitly, given a point in $\calC_P(k)$, let 
\begin{eqnarray}\label{param}
\begin{array}{lrcl}
\theta: &\quad \quad\pp^1&\to\ &\calC_P\\
&(X_0,X_1)&\mapsto& (\vartheta_1,\vartheta_2,\vartheta_3)
\end{array}
\end{eqnarray}
be the parametrization corresponding to that $k$-point. 
Then 
\begin{equation}\label{fcovariantsformula}
f= \frac{1}{\beta\mathbf{c}} \sum_{i=1}^3 \mathbf{A}_i \, \vt_i 
\end{equation}
and
\begin{equation}\label{gcovariantsformula}
g= \frac{1}{\beta^2\mathbf{c}^2}\sum_{i=1}^3 \mathbf{B}_{ij} \, \vt_i \vt_j 
\end{equation}
are forms defined over $k$ corresponding to the point $P$, i.e. with $[f,g]=P$. The factor $\beta$ depends on the choice of the parameterization $(\vt_i)$; it is calculated explicitly in the proof.

Moreover, if $P$ does not belong to the locus $M_3^{\Aut}$, the converse also holds:
\begin{center}
$\calC_P(k)\neq\emptyset \quad\Longleftrightarrow \quad $ there exists a model $(f,g)$ of $P$ defined over $k$.
\end{center} 

\end{thm}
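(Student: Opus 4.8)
The plan is to prove the two implications by different means: the forward implication by unwinding the explicit formulas as a reconstruction, and the converse by identifying $\calC_P$ with the cohomological obstruction of the introduction. For the forward direction, I would first fix an arbitrary model $(f_0,g_0)$ of $P$ over $\bar k$, rescaled (by scaling the pair $(F_0,F_1)$) so that its basic invariants take \emph{exactly} the chosen values $\mathbf{d},\dots,\mathbf{c}$. By the typical-presentation identities (\ref{rf}) and (\ref{rg}) of section \ref{subsec:evenorder}, together with $r=c$ and the definitions of $A_i,B_{ij}$ from Table~\ref{tab:Covariants}, its quadratic covariants $\xi_i^0$ satisfy $\mathbf{c}\,f_0=\sum_i\mathbf{A}_i\xi_i^0$ and $\mathbf{c}^2 g_0=\sum_{i,j}\mathbf{B}_{ij}\xi_i^0\xi_j^0$. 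Since $c\neq 0$ forces $r\neq 0$, the $u_i$ are independent and $\det(\mathbf{C}_{ij})=2\mathbf{c}^2\neq 0$, so $\calC_P$ is a smooth conic and $\theta_0=(\xi_1^0,\xi_2^0,\xi_3^0)$ is an isomorphism $\pp^1\xrightarrow{\sim}\calC_P$ over $\bar k$. The given $k$-parametrization $\theta=(\vt_i)$ is another such isomorphism, so $\gamma:=\theta_0^{-1}\circ\theta\in\pgltwo(\bar k)$ and $\vt_i=\beta\,(\xi_i^0\circ\gamma)$ for a single scalar $\beta$. Substituting into (\ref{fcovariantsformula})--(\ref{gcovariantsformula}), the factor $\beta$ cancels and yields $f=f_0\circ\gamma$, $g=g_0\circ\gamma$; thus $(f,g)$ is a $\bar k$-conjugate of $(f_0,g_0)$ and $[f,g]=P$.

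It remains to see that these forms lie over $k$, which is exactly where $\beta$ must be pinned down: since the construction involves only the $k$-data $\mathbf{A}_i,\mathbf{B}_{ij},\mathbf{c},\vt_i$ apart from $\beta$, it suffices to prove $\beta\in k$. Writing $\vt_i=\beta\,\xi_i^{(f,g)}$ for the actual covariants of the constructed map, I would extract $\beta$ from invariants of the triple $(\vt_i)$ of two coprime degrees: evaluating an even-degree invariant, for instance $\mathbf{d}=(f,f)_2=\frac{1}{\beta^2\mathbf{c}^2}\sum_{i,j}\mathbf{A}_i\mathbf{A}_j(\vt_i,\vt_j)_2$, gives $\beta^2\in k$, while evaluating an odd-degree invariant — e.g. the determinant of $(\vt_1,\vt_2,\vt_3)$ in the basis $X_0^2,X_0X_1,X_1^2$, which is $\beta^3$ times a nonzero element of $k$ — gives $\beta^3\in k$. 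As $\gcd(2,3)=1$ we conclude $\beta=\beta^3/\beta^2\in k$, and the displayed value of $\beta$ is read off from these relations. Hence $(f,g)$ is defined over $k$. I expect this verification to be the main obstacle: the point of $\calC_P(k)$ is given \emph{a priori}, before any model is in hand, so one must show that an arbitrary $k$-rational parametrization — not the covariant parametrization of some known map — nevertheless reconstructs a model of $P$, and does so over $k$.

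For the converse I would pass to the cohomological description of the introduction. Assuming $P\notin M_3^{\Aut}$, a model $\phi$ has $\Aut(\phi)=\{1\}$, so $N(A)=Q(A)=\pgltwo(\bar k)$ and $I_k(\phi)=\{c_\phi\}$ is the single class in $H^1(G_k,\pgltwo(\bar k))$ represented by $\sigma\mapsto\gamma_\sigma$; by the Proposition preceding the theorem, a model over $k$ exists iff this class is trivial, i.e. iff the corresponding twist of $\pp^1$ has a $k$-point. I would then identify that twist with $\calC_P$: the covariant isomorphism $\theta_0:\pp^1\xrightarrow{\sim}\calC_P$ over $\bar k$ satisfies $\sigma(\theta_0)=\theta_0\circ\gamma_\sigma$, because covariants are $\SLtwo$-equivariant and $\sigma(\phi)=\phi^{\gamma_\sigma}$, so the Galois descent datum on the $k$-conic $\calC_P$ is precisely the cocycle $(\gamma_\sigma)$. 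Therefore $\calC_P$ is the conic attached to $c_\phi$, and $\calC_P(k)\neq\emptyset$ iff $c_\phi$ is trivial iff $P$ admits a model over $k$. This yields the asserted equivalence, and the hypothesis $P\notin M_3^{\Aut}$ enters exactly here, guaranteeing that the obstruction is carried by the single conic $\calC_P$ rather than by a family of classes in $H^1(G_k,Q(A))$.
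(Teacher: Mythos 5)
Your forward implication is essentially the paper's argument: fix a $\bar k$-model $(f_0,g_0)$ normalized to have exactly the chosen invariant values, use the typical presentations (\ref{rf})--(\ref{rg}) to see that $(\vt_i)$ and the covariant triple $(\xi_{i,0})$ are two parametrizations of the same smooth conic, hence differ by an automorphism of $\pp^1$ together with a scalar $\beta$, and then pin $\beta$ down in $k$ from $k$-computable invariants of coprime weight. (The paper extracts $\beta$ from the invariants of the reconstructed pair $(f_1,g_1)$ via the weights $2,4,6,4,6,9$; you extract $\beta^2$ from a second transvectant and $\beta^3$ from the determinant of the three quadratics $\vt_i$. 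Both hinge on the same nonvanishing guaranteed by (\ref{relfull}) and $\mathbf{c}\neq 0$, and both are equally valid; just make sure you fall back on a nonzero even-weight invariant when $\mathbf{d}=0$, exactly as the paper falls back on "at least one other invariant does not vanish.") Where you genuinely diverge is the converse. The paper argues directly: given a $k$-model $(f',g')$, it uses $P\notin M_3^{\Aut}$ to produce two nonvanishing invariants of coprime degree, concludes the rescaling factor $\alpha$ lies in $k$, renormalizes so the $k$-model has exactly the invariants $\mathbf{d},\dots,\mathbf{c}$, and observes that its covariants $\xi_i$ then give a $k$-rational parametrization of $\calC_P$ — hence a $k$-point. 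You instead invoke the cohomological framework of the introduction, identifying $\calC_P$ with the twist of $\pp^1$ by the cocycle $(\gamma_\sigma)$ via the equivariance $\sigma(\theta_0)=\theta_0\circ\gamma_\sigma$, and appeal to the proposition that $k$ is a field of definition iff $I_k(\phi)$ contains the trivial class. This is correct (with $\Aut(\phi)=1$ the obstruction is carried by the single class $c_\phi$), and it has the virtue of explaining \emph{why} $\calC_P$ is the obstruction conic promised in the introduction; the paper's route is more elementary and yields the $k$-point constructively, but leaves that identification implicit. Note that in your version the hypothesis $P\notin M_3^{\Aut}$ is used only to kill $\Aut(\phi)$, whereas the paper also uses it to arrange coprime-degree nonvanishing invariants; since triviality of $\Aut(\phi)$ already follows from $P\notin M_3^{\Aut}$, your use of the hypothesis is legitimate.
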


\begin{proof}
We apply the quadratic invariants $u_i$ listed in Table \ref{tab:Covariants} to obtain typical presentations of the pair $(f,g)$. \\
 
First we show that, in the case that $\calC_P(k)$ is non-empty, the formulas do give a model over $k$. 
There exists a pair $(f_0,g_0)$ of forms with coefficients in $\bar{k}$ whose moduli point is $P$. Moreover $(f_0,g_0)$ can be chosen to have invariants equal to $\mathbf{d},\mathbf{i},\mathbf{j},\mathbf{a},\mathbf{b},\mathbf{c}$. Writing $\xi_{i,0}$ for the three quadratic covariants of the pair $(f_0,g_0)$, and applying (\ref{rf}) and (\ref{rg}) to these forms, one obtains
\begin{eqnarray}
\mathbf{c}^{-1}\sum_{i=1}^3 \mathbf{A}_i \, \xi_{i,0} &= &f_0 \label{recon1}\\
\mathbf{c}^{-2} \sum_{i=1}^3 \mathbf{B}_{ij} \, \xi_{i,0}\xi_{j,0} &= &g_0 \label{recon2}
\end{eqnarray}

Both $\vt_i$ and $\xi_{i,0}$ give parametrizations of $\calC_P$. Any two such parametrizations differ by an automorphism of $\pp^1$; so on substituting $\vt_i$ for $\xi_{i,0}$ in the lefthand side of (\ref{recon1}) and (\ref{recon2}) one obtains a new pair of forms $f_1$ and $g_1$, which have coefficients in $k$ and which differ from $f_0,g_0$ only by an automorphism of $\pp^1$. That is, there exists $M\in\GLtwo(\bar{k})$ such that $f_1=f_0\circ M$ and $g_1=g_0\circ M$. Write $M=\alpha N$, for some choice of $N\in \SLtwo(\bar{k})$ and $\alpha \in \bar{k}$. Set $\beta= \alpha^2$. Then $f_1=\beta f_0\circ N$ and $g_1=\beta^2 g_0\circ N$. With this definition of $\beta$, the pair of forms $(f,g)$ from the theorem statement are $\SLtwo(\bar{k})$-equivalent to $(f_0,g_0)$, as claimed. It remains to find $\beta$ explicitly, and to show that $\beta\in k$; then we have the desired model $(f,g)$, defined over $k$.

Write $\mathbf{d_1}, \mathbf{i_1},...$ for the invariants of the pair ($f_1,g_1$). Then from the bi-degrees of the invariants (see Table \ref{tab:Invariants}) we have  
\begin{equation}\label{degsbeta}\begin{small}
\mathbf{d_1}=\beta^2\mathbf{d}, \; \mathbf{i_1}=\beta^4\mathbf{i}, \; \mathbf{j_1}=\beta^6\mathbf{j}, \; \mathbf{a_1}=\beta^4\mathbf{a}, \; \mathbf{b_1}=\beta^6\mathbf{b}, \; \mathbf{c_1}=\beta^9\mathbf{c}
\end{small}
\end{equation}
By (\ref{relfull}) and the hypothesis $\mathbf{c}\neq0$, at least one other invariants, together with $\mathbf{c}$, does not vanish. So according to (\ref{degsbeta}) we can obtain $\beta$ explicitly as a quotient of the appropriate powers of invariants of $(f_1,g_1)$ and $(f,g)$. In particular, $\beta$ is in $k^\times$, since all the invariants are in $k$.\\


For the converse, suppose $P\notin M_3^{\Aut}$ and let $(f^\prime,g^\prime)$ be a model over $k$ corresponding to the point $P$. Note that the invariants of $(f^\prime,g^\prime)$ are in $k$. We must show $\calC_P(k)\neq\emptyset$. Write $\mathbf{d}^\prime, \mathbf{i}^\prime,$ etc. for the values of the invariants of the pair ($f^\prime,g^\prime$). Since 
$$
[\mathbf{d^\prime}:\mathbf{i^\prime}:\mathbf{j^\prime}:\mathbf{a^\prime}:\mathbf{b^\prime}:\mathbf{c^\prime}]=[\mathbf{d}:\mathbf{i}:\mathbf{j}:\mathbf{a}:\mathbf{b}:\mathbf{c}]
$$ 
as points of $\pp(2,2,3,3,4,6)$, there exists $\alpha\in \mathbb{G}_m(\bar{k})$ such that 
\begin{equation}\label{degrees}
\mathbf{I}^\prime=\alpha^{\deg(I)}\mathbf{I},
\end{equation}
for any non-zero invariant $I$. Since $P\notin M_3^{\Aut}$, at least one of $\mathbf{a}$ or $\mathbf{j}$ does not vanish (else the automorphism group contains $\zz/2\zz$); also, at least one of $\mathbf{d}$, $\mathbf{i}$ or $\mathbf{b}$ does not vanish (else the automorphism group contains $\zz/3\zz$). Therefore we can find two non-zero invariants such with coprime degrees; so (\ref{degrees}) implies $\alpha$ is in $k$. Replacing $(f^\prime,g^\prime)$ with $(\alpha^{-1}f^\prime,\alpha^{-1}g^\prime)$, we may take $(f^\prime,g^\prime)$ to have invariants exactly equal to $\mathbf{d},\mathbf{i},\mathbf{j},\mathbf{a},\mathbf{b},\mathbf{c}$. The covariants $\xi_i$ of this new pair $(f^\prime,g^\prime)$ give a parametrization of $\calC_P$ defined over $k$; so the conic has a $k$-point.   
\end{proof}

If $\mathbf{c}=0$, the conic $\calC_P$ will be singular. But when $\tilde{\mathbf{c}}\neq0$, one can choose a different pair of independent quadratic covariants: set $\tilde{u}_1=f$, $\tilde{u}_2=(f,g)_2$ and $\tilde{u}_3=(u_2,f)_1$, then proceed to form the other covariants and invariants as in Table \ref{tab:Covariants} but with $\tilde{u}_i$ in place of $u_i$. One has 
\begin{eqnarray*}
\begin{array}{llllll} 
\tilde{C}_{11} = d ,&& \quad\quad \tilde{C}_{12}  =  a ,&& \quad\quad \tilde{C}_{13}  =  0\\
\\
\tilde{C}_{22} = b+\frac{1}{3}id ,&& \quad\quad \tilde{C}_{23}  =  0 ,&& \quad\quad \tilde{C}_{33}  =  \tilde{c}
\end{array}
\end{eqnarray*}
and
\begin{eqnarray}\label{Btilde}
\begin{array}{llllll} 
\tilde{B}_{11} = a ,&& \quad \tilde{B}_{12}  =  b+\frac{1}{3}id ,&&\quad \tilde{B}_{13}  =  0\\
\\
\tilde{B}_{22} = \frac{1}{2}ia+\frac{1}{3}jd ,&&\quad \tilde{B}_{23}  =  -c ,&&\quad \tilde{B}_{33}  = \frac{1}{2}ab -\frac{1}{12}iad-\frac{1}{6}jd^2
\end{array}
\end{eqnarray}
From the covariant theory of quadratic forms (\cite{Clebsch} $\S$57), we have the following relations:
\begin{eqnarray}\label{changeov}
\tilde{\xi}_1=(\tilde{u}_2,\tilde{u}_3)_1&=&\frac{1}{2}(\tilde{C}_{22}\tilde{u}_1-\tilde{C}_{12}\tilde{u}_2)\\
\tilde{\xi}_2=(\tilde{u}_3,\tilde{u}_1)_1&=&\frac{1}{2}(\tilde{C}_{11}\tilde{u}_2-\tilde{C}_{12}\tilde{u}_1)\\
\tilde{\xi}_3=(\tilde{u}_1,\tilde{u}_2)_1&=&-\tilde{u}_3
\end{eqnarray}
Moreover
\[
\tilde{\xi}_3^2=\frac{1}{2}(\tilde{C}_{11}\tilde{u}_2^2-2\tilde{C}_{12}\tilde{u}_1\tilde{u}_2+\tilde{C}_{22}\tilde{u}_1)
\]

Notice from (\ref{Btilde}) that when $c=0$, the typical presentation of $g$ (i.e. the analogue of (\ref{recon2}) for $\tilde{\xi}_{i}$) contains no odd power of $\tilde{\xi}_3$; so we can use the above expressions to write a typical presentation of $g$ entirely in terms of the invariants $\tilde{B}_{ij}$ together with the covariants $\tilde{u}_1=f$ and $\tilde{u}_2$. 

Applying these identities and the construction of Theorem \ref{covariantstheorem}, we get an explicit solution to the FOM/FOD question that applies when $c=0$. 

\begin{thm}\label{covariantstheoremtilde}
Let $P\in M_3(k)$ be a moduli point outside of the locus $\{\tilde{c}=0\}$. The formulas (\ref{param}), (\ref{fcovariantsformula}) and (\ref{fcovariantsformula}) -- with $\mathbf{c},\, \mathbf{A}_i,\,\mathbf{B}_{ij},\,\mathbf{C}_{ij}$ everywhere replaced with $\mathbf{\tilde{c}},\, \mathbf{\tilde{A}}_i,\,\mathbf{\tilde{B}}_{ij},\,\mathbf{\tilde{C}}_{ij}$ -- yield a model $(f,g)$ of $P$. One has the folowing implications
\begin{itemize}
\item \noindent $\tilde{\calC}_P(k)\neq\emptyset \quad\Longrightarrow \quad $ there exists a model $(f,g)$ of $P$ defined over $k$.

\item If moreover $P$ does not belong to the locus $M_3^{\Aut}$, then\\
\noindent$\tilde{\calC}_P(k) \neq\emptyset \quad \Longleftrightarrow \quad $ there exists a model $(f,g)$ of $P$ defined over $k$.

\end{itemize}

\end{thm}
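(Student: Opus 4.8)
The plan is to reduce Theorem~\ref{covariantstheoremtilde} to the already-established Theorem~\ref{covariantstheorem} by substituting the alternative system of quadratic covariants $\tilde{u}_1,\tilde{u}_2,\tilde{u}_3$ for $u_1,u_2,u_3$ throughout. The key observation is that the entire proof of Theorem~\ref{covariantstheorem} depended only on two structural facts: first, that $(f,g)$ possesses three quadratic covariants that are generically linearly independent, whose ``determinant'' invariant $r$ is nonzero; and second, that the typical-presentation formulas (\ref{rf}) and (\ref{rg}) then express $f$ and $g$ as explicit combinations of the associated order-two covariants $\xi_i$ with invariant coefficients. The role played by $\mathbf{c}=r$ and the requirement $\mathbf{c}\neq 0$ in that argument is now played by $\tilde{c}$: the hypothesis $P\notin\{\tilde{c}=0\}$ guarantees, via the Proposition preceding Theorem~\ref{covariantstheorem} (Clebsch $\S\S107$--$108$), that $\tilde{u}_1,\tilde{u}_2,\tilde{u}_3$ are independent, so that $\tilde{r}=\tilde{c}$ is the relevant nonvanishing invariant and the conic $\tilde{\calC}_P$ with equation $\sum \tilde{\mathbf{C}}_{ij}x_ix_j=0$ is nondegenerate.

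The forward implication will be handled exactly as before. Assuming $\tilde{\calC}_P(k)\neq\emptyset$, I would take a model $(f_0,g_0)$ over $\bar{k}$ with the prescribed invariant values, form its covariants $\tilde{\xi}_{i,0}$, and invoke the tilde-analogues of (\ref{recon1}) and (\ref{recon2}); a $k$-rational parametrization of the conic differs from the one given by $\tilde{\xi}_{i,0}$ by an element of $\GLtwo(\bar{k})$, and absorbing the scalar part as $\beta=\alpha^2$ and reading off $\beta$ from the bidegree relations (\ref{degsbeta}) shows $\beta\in k^\times$, hence yields a model over $k$. The converse (under the hypothesis $P\notin M_3^{\Aut}$) is identical to before: a $k$-model has invariants in $k$, the argument with two coprime-degree nonvanishing invariants forces the scaling factor $\alpha$ into $k$, and then the covariants $\tilde{\xi}_i$ of the $k$-model parametrize $\tilde{\calC}_P$ over $k$.

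The one genuinely new feature, and the step I expect to be the main obstacle, is that when $c=0$ the covariant $\tilde{\xi}_3$ does \emph{not} give an independent parametrizing coordinate in the same naive way, because the conic's equation only involves $\tilde{C}_{11},\tilde{C}_{22},\tilde{C}_{33},\tilde{C}_{12}$ (with $\tilde{C}_{13}=\tilde{C}_{23}=0$), and $\tilde{\xi}_3=-\tilde{u}_3$ is not expressible linearly in $\tilde{u}_1,\tilde{u}_2$. The resolution is exactly the computation recorded just before the theorem: the vanishing $\tilde{B}_{13}=0$ together with the relation $\tilde{B}_{23}=-c=0$ means that in the typical presentation (\ref{rg}) of $g$ the covariant $\tilde{\xi}_3$ enters only through $\tilde{\xi}_3^2$, and the displayed identity for $\tilde{\xi}_3^2$ rewrites this square back in terms of $\tilde{u}_1,\tilde{u}_2$ and the invariants $\tilde{C}_{ij}$. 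Thus $g$, like $f$, is presented purely in terms of the parametrizing covariants and invariants, so the substitution of a $k$-rational parametrization again produces forms over $k$. I would therefore organize the proof as: verify independence and $\tilde{r}=\tilde{c}$, establish the tilde reconstruction formulas (using the $\tilde{\xi}_3^2$ identity to eliminate the apparent dependence on $\tilde{\xi}_3$ when $c=0$), and then copy the descent-of-$\beta$ and converse arguments from Theorem~\ref{covariantstheorem} verbatim with tildes inserted.
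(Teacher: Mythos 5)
Your reduction to Theorem \ref{covariantstheorem} is the right overall strategy, and you correctly isolate one of the two new difficulties: when $c=0$ the covariant $\tilde{\xi}_3$ cannot serve as an independent parametrizing coordinate, and one must use $\tilde{B}_{13}=0$, $\tilde{B}_{23}=-c=0$ together with the identity expressing $\tilde{\xi}_3^2$ in terms of $\tilde{u}_1,\tilde{u}_2$ and the $\tilde{C}_{ij}$ to rewrite the typical presentation of $g$. This is exactly how the paper handles that point (in the discussion preceding the theorem). For $\mathbf{c}\neq 0$ the paper likewise just cites the proof of Theorem \ref{covariantstheorem}.

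However, there is a genuine gap in your treatment of the case $\mathbf{c}=0$, which is the case this theorem exists to cover. You claim that ``reading off $\beta$ from the bidegree relations (\ref{degsbeta}) shows $\beta\in k^\times$'' and that the descent-of-$\beta$ argument carries over verbatim. It does not: in (\ref{degsbeta}) the only invariant in which $\beta$ appears to an odd power is $\mathbf{c}_1=\beta^9\mathbf{c}$, and the previous proof extracted $\beta$ precisely by combining this odd power with an even one. When $\mathbf{c}=0$ the surviving relations involve only $\beta^2,\beta^4,\beta^6$, so the invariants determine $\beta^2$ but not $\beta$; a priori $\beta$ could lie in a quadratic extension of $k$, and then $f=\beta^{-1}\tau_1$ would fail to be defined over $k$ (note that one really needs the asymmetric rescaling $(f_1/\beta,\,g_1/\beta^2)$ to land in the correct $\SLtwo$-orbit, so one cannot dodge the issue by rescaling by an arbitrary element of $k$). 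The paper supplies the missing argument by a separate explicit computation: it passes to the $k$-isomorphic conic $\tilde{\calD}_P$, writes down the explicit $k$-parameterization $\tau_i$ attached to a $k$-point $(t_1,t_2,t_3)$, and computes directly that $(\tau_i,\tau_j)_2=\beta^2\tilde{\mathbf{C}}_{ij}$ with $\beta^2=4t_3^2$, so that $\beta=\pm 2t_3\in k$ and either square root may be used in the reconstruction formulas. Your proof needs this (or an equivalent) argument; without it the forward implication is not established when $\mathbf{c}=0$.
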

\begin{proof}
The proof in the case $\mathbf{c}\neq0$ is as for Theorem \ref{covariantstheorem}. In the case $\mathbf{c}=0$, the factor $\beta$ in (\ref{degsbeta}) appears to even powers only; so, using the method of the previous proof, one can recover $\beta^2$ only, and not $\beta$ itself. However, we can can show that $\beta\in k$, so using either of the two square roots of $\beta^2$ in place of $\beta$ gives us a model $(f,g)$ that is defined over $k$ and that has the required invariants.  

To see that $\beta \in k$, note that the substitution 
\[
X_1\mapsto  \tilde{\mathbf{C}}_{12}x_2-\tilde{\mathbf{C}}_{22}x_1,\quad\quad x_2\mapsto  \tilde{\mathbf{C}}_{11}x_2-\tilde{\mathbf{C}}_{12}x_1,\quad\quad x_3\mapsto x_3,
\]
corresponding to (\ref{changeov}), gives a $k$-isomorphism of $\tilde{\calC}_P$ with the conic
\[
\tilde{\calD}_P: \quad\quad \tilde{\mathbf{C}}_{11}x_2^2-2\tilde{\mathbf{C}}_{12}x_1 x_2 +\tilde{\mathbf{C}}_{22} x_1^2+2x_3^2
\]
As before, there exists a pair $(f_0,g_0)$ of forms with coefficients in $\bar{k}$ and invariants equal to $\mathbf{d},\mathbf{i},\mathbf{j},\mathbf{a},\mathbf{b},\mathbf{c}$. Write $\tilde{u}_{i,0}$ for the quadratic covariants of the pair $(f_0,g_0)$. Then the map $(X_0,X_1)\mapsto (\tilde{u}_{i,0}(X_0,X_1))_i$ gives a parameterization of $\tilde\calD_P$. The system of quadratic forms $\{\tilde{u}_{i,0}\}$ has invariants $(\tilde{u}_{i,0},\tilde{u}_{j,0})_2=\tilde{\mathbf{C}}_{ij}$.

Given a $k$-point of $\tilde{\calD}_P$, let $(t_1,t_2,t_3)\in k^3$ be a choice of coordinates for the point. Then one has a $k$-parameterization $(X_0,X_1)\mapsto (\tau_i(X_0,X_1))_i$ of $\tilde{\calD}_P$, where
\begin{eqnarray}\label{tau}
\tau_1(X_0,X_1)&=&(t_1\tilde{\mathbf{C}}_{11}+t_2\tilde{\mathbf{C}}_{12})X_0^2+2t_2 \tilde{\mathbf{C}}_{22}X_0X_1-t_1 \tilde{\mathbf{C}}_{22}X_1^2\\
\tau_2(X_0,X_1)&=&-t_2\tilde{\mathbf{C}}_{11} X_0^2+2t_1\tilde{\mathbf{C}}_{11}X_0X_1+(t_1\tilde{\mathbf{C}}_{12}+t_2\tilde{\mathbf{C}}_{22})X_1^2\\
\tau_3(X_0,X_1)&=&-t_3(\tilde{\mathbf{C}}_{11}X_0^2+\tilde{\mathbf{C}}_{12}X_0X_1+\tilde{\mathbf{C}}_{22}X_1^2)
\end{eqnarray}
Since both $\tau_i$ and $\tilde{u}_{i,0}$ are parameterizations of the same conic, there exist $N\in \SLtwo(\bar{k})$ and $\beta \in \bar{k}$, such that $\tau_i=\beta \,\tilde{u}_{i,0}\circ N$. Accordingly the invariants are related by a factor of $\beta^2$; that is, $(\tau_i,\tau_j)_2=\beta^2\tilde{\mathbf{C}}_{ij}$. On the other hand, a computation using the explicit formulas (\ref{tau}) for $\tau_i$ show that $\beta^2=4t_3^2$. In particular $\beta$ is in $k$, as claimed.

Let $\beta^\prime$ be either of the square roots of $\beta^2=4t_3^2$ in $k$. Then the required model $(f,g)$ is given by 
\begin{eqnarray}
f&=&\frac{1}{\beta^\prime}\tau_1\\
g&=&\frac{1}{\tilde{\mathbf{c}}^{2}\beta^{\prime 2}} \sum_{i=1}^3 \tilde{\mathbf{B}}_{ij} \, \tau_{i}\tau_{j}
\end{eqnarray}
\end{proof}

\subsubsection{Example}
Consider the map 
\[
\psi(x)=i\left(\frac{x-1}{x+1}\right)^3
\]
from \cite{SilvermanFOD}. The 6-tuple of invariants is $(72i, 10i ,3-3i , - 72+72i,-48  ,864i)$, which is equivalent to the point $(144,20,-12,288,192,-6912)\in M_3(\qq)$. In particular, the field of moduli is $\qq$. From the invariants it is easy to check that $\Aut(\psi)=1$. Applying the method of covariants as in Theorem \ref{covariantstheorem} to these coordinates, one obtains the conic
\begin{eqnarray*}
\quad\quad\quad\calC_\psi:\;  144 x_1^2 + 576 x_1 x_2 + 1152 x_2^2 + 384 x_1 x_3 + 768 x_2 x_3 + 1408 x_3^2=0.
\end{eqnarray*}
This can be diagonalized to 
\[ 
144X_1^2+576X_2^2+1152X_3^2 =0.
\]
Since this clearly has no points over $\rr$, the conic $\calC_\psi$ has no $\qq$-rational point; therefore Theorem \ref{covariantstheorem} implies that $\psi$ cannot be defined over $\qq$.

\subsection{Constructing maps with non-trivial automorphism group}

If $P\in M^\Aut_3(k)$, the covariants construction in the previous section can fail in two different ways:
\begin{enumerate}
\item If $P\in  \mathsf{C_2^{(1)}}(k)$, both $c=0$ and $\tilde{c}=0$, so both conics $\calC_P$ and $\tilde{\calC}_P$ are singular.
\item If $P\in \mathsf{C_2^{(2)}}(k)$ or $P\in \mathsf{C}_3(k)$, then one of the conics $\calC_P$ or $\tilde{\calC}_P$ may be non-singular, but the proofs of Theorems \ref{covariantstheorem} and \ref{covariantstheoremtilde} do not give a necessary condition for the existence of a model for $P$ defined over $k$\end{enumerate}
We shall show in sections \ref{C22} and \ref{C3} that FOM=FOD for any point $P\in \mathsf{C_2^{(2)}}$ or $P\in C_3$ for which at least one of $c$ or $\tilde{c}$ does not vanish. That leaves case (1), which is dealt with in section \ref{dep}. 

Note that in this section we refer to models by the 8-tuple of coefficients of the corresponding pair of binary forms.

\subsubsection{The locus $\mathsf{C_2^{(2)}}$}\label{C22}

If $P\in \mathsf{C_2^{(2)}}(k)$ and $\tilde{c}$ vanishes at $P$, then $c$ also vanishes at $P$, by (\ref{relfull}); so $P\in \mathsf{D_4^{(1)}}(k)$. This case is dealt with in section \ref{dep} below, so we may assume $\tilde{c}\neq 0$. A point $P\in \mathsf{C_2^{(2)}}(k)$ can be represented by coordinates $[\mathbf{d}:\mathbf{i}:0:0:\mathbf{b}:\mathbf{c}]$ with values in $k$ and with $\mathbf{d}=-2\lambda^2$, for some $\lambda\in k$ (to see this, take arbitrary coordinates for $P$ with values in $k$, then act by $\alpha=\sqrt{-2\mathbf{d}}\in \mathbb{G}_m(\bar{k})$).

Using these values, the conic $\tilde{\calC}_P$ has equation
$$
-2\lambda^2 X_1^2+\left(\frac{1}{3}\mathbf{d}\mathbf{i}+\mathbf{b}\right)X_2^2-\lambda^2 \left(\frac{1}{3}\mathbf{d}\mathbf{i}+\mathbf{b}\right) X_3^2=0
$$
This has the $k$-point $[0:\lambda:1]$. Therefore, when $\mathbf{c}\neq0$, the point $P$ always has a model defined over the field of moduli, by the construction of Theorem \ref{covariantstheoremtilde}. 

In the case $\mathbf{c}=0$, one can check that the model 
$$
\left[-2\lambda X_0 X_1, \lambda^{-3}\left(\frac{1}{3} \mathbf{d}\mathbf{i}+\mathbf{b}\right)+2\lambda X_0 X_1^3\right]
$$ 
corresponds to $P$.

\subsubsection{The locus $\mathsf{C}_3$}\label{C3}

If $P\in \mathsf{C}_3(k)$, and at least one of $a$ or $j$ does not vanish, the conic $\calC_P$ is
$$
\mathbf{a}X_0 X_1-\frac{1}{3} \mathbf{j} \mathbf{a} X_3^2,
$$
which has the  $k$-point $[1:\frac{1}{3} \mathbf{j}:1]$.
From this one obtains a model $(f,g)$ defined over $k$. Explicitly, $(f,g)$ has coefficients
$$
\left[\frac{- \mathbf{j}^{2} \mathbf{a}^{2}}{9 \mathbf{c}}, \frac{-2 \mathbf{j} \mathbf{a}^{2}}{3\mathbf{c}}, \frac{- \mathbf{a}^{2}}{\mathbf{c}}, \frac{2 \mathbf{j}^{4} \mathbf{a}^{4} + 9 \mathbf{j}^{2} \mathbf{a}^{3}}{81\mathbf{c}^{2}}, \frac{2 \mathbf{j}^{3} \mathbf{a}^{3}}{9 \mathbf{c}^{2}}, \frac{2 \mathbf{j}^{2} \mathbf{a}^{3}}{3 \mathbf{c}^{2}}, \frac{2 \mathbf{j} \mathbf{a}^{3}}{3 \mathbf{c}^{2}}, 0\right]
$$
If $\mathbf{j}=0$, i.e. $P=[0:0:0:1:0:0]$, one has a model with the following coefficients
$$
[0,0,1,0,1,0,0,1]
$$
If $\mathbf{a}=0$ then $P\in \mathsf{A}_4(k)$ -- see below.

\subsubsection{The locus $\mathsf{C_2^{(1)}}$: the case where there is no pair of independent quadratic covariants}\label{dep}

In this case $c=\tilde{c}=0$ and the covariants method yields no information. Nonetheless, in this case one can use Gr\"obner bases to find the reconstruction from the invariants and the obstruction to `FOM=FOD', as detailed in the following proposition.

\bp\label{C21}
Let $P\in \mathsf{C_2^{(1)}}(k)$ with weighted projective coordinates \mbox{$P=[\mathbf{d}:\mathbf{i}:\mathbf{j}:\mathbf{a}:\mathbf{b}:0]$}. 
\begin{enumerate}
\item If $\mathbf{d}\neq 0$, then $P$ has a model over $k$ if and only if the conic defined by
\begin{equation}\label{C21con}
9\mathbf{d}^3X^2 + 8\mathbf{d}^2Y^2 - 24\mathbf{da}YZ + (-36\mathbf{d}^2\mathbf{i} + 72\mathbf{a}^2)Z^2=0
\end{equation}
has a point, say $(x,y,z)$, over $k$. In that case set 
\begin{equation}\label{C21rels}
\begin{array}{l l}
c_5=x/z&;\quad \quad c_6=y/z\\
c_3= \mathbf{d}/2 &;\quad \quad c_4=\frac{2\mathbf{a}}{\mathbf{d}^2}-\frac{1}{3 \mathbf{d}} c_6.
\end{array}
\end{equation}
Then the model over $k$ is $[1,0,c_3,c_4,c_5,c_6,-c_3c_5,c_3^2c_4]$.

\item If $\mathbf{d}= 0$, then $P\in \mathsf{D_4^{(2)}}(k)$. Assume neither $\mathbf{i}$ nor $\mathbf{j}$ vanishes. In this case, $P$ always has a model over $k$, given by $[0,0,0,-27\mathbf{i}^3,-27\mathbf{i}^3,0,24\mathbf{j}^2,0]$. 

\item If $\mathbf{d}=\mathbf{j}=0$, then $P\in \mathsf{D}_8(k)$, and $P$ always has a model over $k$, given by $[0,0,0,1,0,0,0,1]$. 

\item If $\mathbf{d}=\mathbf{i}=0$, then $P\in \mathsf{A}_4(k)$. Over $\bar{k}$ one has the model $\psi$ with coefficients $[0,0,0,1,0,2\sqrt{-3},0,1]$. The obstruction $I_k(\phi)$ contains a class corresponding to the conic
\[
X^2+3Y^2-2Z^2=0
\]
\end{enumerate}
\ep

We need the following two lemmas.

\bl \cite{Beau}
Let $B\leq \pgltwo(k)$ be a cyclic subgroup of order two. Then $B$ is conjugate by an element of $  \pgltwo(k)$ to $A_\alpha:=\langle z \mapsto \alpha/z\rangle$ for some $\alpha \in k^\times/(k^\times)^2$.
\el

\bl
If $\phi=(f,g)$ is a model defined over $k$ for a point $P\in \mathsf{C_2^{(1)}}(k)$ with $d\neq0$, then each element of $\Aut(\phi)$ is defined over $k$.
\el

\bpf
From the normal form in Table \ref{tab:Auts}, one can see that fixed points of the involution $\gamma \in \Aut(\phi)$ are precisely the roots of $f$. Since $f$ has coefficients in $k$, the element $\gamma$ must also be defined over $k$.
\epf

\begin{proof}[Proof of Proposition \ref{C21}]
For part (1): suppose that $\mathbf{d}\neq 0$ and that $P$ has a model $\phi$ over $k$. Then by the preceding two lemmas, we may assume that $\Aut(\phi)$ is generated by $\langle z \mapsto \alpha/z\rangle$ for some $\alpha \in k^\times/(k^\times)^2$. Computing relative invariants for this group as in Section \ref{autos}, one sees that $\phi$ must have the form 
\[
( X_0^2-\alpha X_1^2,  c_4(X_0^4+\alpha^2 X_1^4)+c_5(X_0^3X_1+\alpha X_0 X_1^3)+c_6 X_0^2X_1^2)
\]
for some $c_4,c_5,c_6\in k$. Set $c_3=-\alpha$. By computing a Gr\"obner basis, one can see that the only non-trivial relations between the coefficients $c_i$ and the invariants are those given in equations (\ref{C21con}) and (\ref{C21rels}). This establishes the assertion of the proposition.

One can also construct the conic (\ref{C21con}) by explicit cohomology computations, without using the two lemmas. Start with the model $\psi$ defined over $L=k(\sqrt{-2\mathbf{d}})$ with coefficients 
\[
[0,\sqrt{-2\mathbf{d}},0,1,0,-3\mathbf{a}/\mathbf{d},0,\mathbf{i}/2-3\mathbf{a}^2/4\mathbf{d}^2];
\]
this can easily be found using Gr\"obner bases. Set $e=\mathbf{i}/2-3\mathbf{a}^2/4\mathbf{d}^2$. The model has $A=\Aut(\psi)=\langle z\mapsto -z\rangle$. And $N(A)=\mathfrak{D}_\infty=\mathbb{G}_m\rtimes \mu_2$. The class $c_\psi$ is represented by the cocycle
\[
\gamma_\tau =
\left\{
	\begin{array}{ll}
		1  & \mbox{if } \tau \in G_L \\
		(z\mapsto r/z) & \text{otherwise}
	\end{array}
\right.
\]
Write $G_{L/k}=\langle \sigma \rangle$. By inflation-restiction we can reduce from $G_k$ to $G_{L/k}$, where we have the following situation (see \cite{SilvermanFOD}: proof of Theorem 3.2):
\begin{displaymath} 
\xymatrix{ 
\tiny{\matrixofmap} & H^1(G_{L/k}, L^\times\rtimes\mu_2(L))\ar@{->}[r]^{p}\ar@{->}[d]^{\visom} & H^1(G_{L/k}, L^\times\rtimes\mu_2(L))\ar@{->}[d]^{\visom} \\ 
x\ar@{|->}[u]   & \dfrac{k^\times/\mathrm{Nm}_{L/k}(L^\times)}{x=x^{-1}}\ar@{->}[r]^{x\mapsto x^2}&  \dfrac{k^\times/\mathrm{Nm}_{L/k}(L^\times)}{x=x^{-1}}
 }
\end{displaymath} 
Since $\frac{k^\times/\mathrm{Nm}_{L/k}(L^\times)}{x=x^{-1}}$ is a group of exponent 2, the element $c_\psi$ is in the image of $p$ if and only if it is trivial; that is, if and only either $r$ is the norm of an element of $L$ or $\tau=1$; that is, if and only there is a solution in $k$ to $X^2+2dY^2-rZ^2$. One can easily check that this conic is isomorphic over $k$ to the one in the theorem statement.

For part (4): The model $\psi$ was computed in Section \ref{autos}: it is the normal form of a map with automorphism group $A=\langle z\mapsto-z, z\mapsto \frac{1}{z}, z\mapsto i\frac{z+1}{z-1}\rangle$; we have $N(A)=A\rtimes \langle\frac{z+1}{z-1}\rangle\simeq \frak{S}_4$, and $Q\simeq\{\pm1\}$. Let $L=k(\sqrt{-3})$ and write $G_{L/k}=\langle \sigma \rangle$. From the model $\psi$, we compute that 
\[
\gamma_\tau =
\left\{
	\begin{array}{ll}
		1  & \mbox{if } \tau \in G_L \\
		-1 & \text{otherwise}
	\end{array}
\right.
\]
is a cocycle representing $c_\psi$. Using the splitting coming from the presentation of $A$ as a semidirect product we can find an inverse image for $c_\psi$ under $p$; namely  
\[
\eta_\tau =
\left\{
	\begin{array}{ll}
		1  & \mbox{if } \tau \in G_L \\
		z\mapsto \frac{z+1}{z-1} & \text{otherwise}
	\end{array}
\right.
\]
Computing the twist corresponding to this $\pgltwo$-cocycle, we obtain the conic from the statement.

The other cases are easily verified by computing the invariants of the claimed models, which come from the normal forms given in Section \ref{autos}. 

\epf

\section{Even degrees: Quadratic Maps}\label{evendegs}

Covariants can used to explicitly construct generic maps of any even degree from their moduli.  
 
In this section we work through the covariants method in the case of quadratic maps in order to illustrate the general method for even degree maps and to make clear how the invariant theory approach to $M_d$ tallies with the description in \cite{SilvermanSpace} and \cite{ManesYasufuku}.

\subsection{The invariants and covariants}
 \begin{table}[h]
  \centering
\begin{center}
    \begin{tabular}{ c  c  l  c c c}
    \hline
    \multicolumn{3}{c}{Covariants}& Order  & Degree  \\ \hline
    $H$ & $=$ & $(g,g)_2$ & 2 & 2&\\
    $t$ & $=$ & $(g,H)_1$ & 3 & 3&\\
    $s_1$ & $=$ & $(g,f^3)_3$ & 0 & 4 &\\
    $s_2$ & $=$ & $(H,f^2)_2$ & 0 & 4 &\\
    $s_3$ & $=$ & $(t,g)_3$ & 0 & 4 &\\  
    $R$ & $=$ & $(t,f^3)_3$ & 0 & 6 &\\
    $V_0$ & $=$ & $(H,f)_1$ & 1 & 3&\\ 
    $V_1$ & $=$ & $(g,f^2)_2$ & 1& 3& \\ 
	$b_0$ & $=$ & $(V_1,f)_1$ & 0& 4& \\
	$b_1$ & $=$ & $(V_0,f)_1$ & 0& 4& \\
	$r$ & $=$ & $(b_0,b_1)_1$ & 0 & 6 &\\
    $a_{ijk}$ & $=$ & $(g,V_i)_1(g,V_j)_1(g,V_k)_1$ & 0& 10& for $0\leq i,j,k\leq 1$\\ 
   
    \hline
    \end{tabular}
\end{center}
\caption{Covariants for quadratic maps}
  \label{tab:Covariantsdegree2}
\end{table} 

Given a quadratic rational map $\phi=F_0/F_1$, one obtains a pair of binary forms -- one cubic form $g$ and one linear form $f$ -- using the Clebsch-Gordan isomorphism. Then one can construct the covariants by transvection as described in \cite{GraceYoung} ($\S$ 138). The covariants that we shall need are given in Table \ref{tab:Covariantsdegree2}. The ring of invariants is generated by $s_1,s_2,s_3$ and $R$. Since $R$ is the only invariant of degree 6, $r$ must be a multiple of $R$; in fact $R=r$. 

One usually uses $\sigma_1$ and $\sigma_2$ -- the first and second elementary symmetric functions in the multipliers of the map -- as the coordinates on $M_2$. If one writes 
\begin{eqnarray*}
\sigma_i=\tau_i\cdot \rho^{-1} \quad\quad\quad \mathrm{for\quad} i=1,2 
\end{eqnarray*}
where $\rho $ is the resultant of $F_0$ and $F_1$, then one can relate the invariants from Table \ref{tab:Covariantsdegree2} to $\sigma_1$ and $\sigma_2$ as follows:
\begin{eqnarray*}
s_1&=&   5 \tau_1 + 2 \tau_2 + 6 \rho \\
s_2&=& \frac{ 2}{3} \tau_1 + \frac{2}{3} \tau_2 - 4 \rho\\
s_3&=& \frac{-4}{27} \tau_1 +\frac{ 2}{27} \tau_2 + \frac{2}{9} \rho\\
\end{eqnarray*}

There is a single relation among the four basic invariants:

\begin{eqnarray}\label{deg2rel}
r^2 - \frac{1}{2}s_1^2 s_3 + \frac{1}{2}s_2^3=0
\end{eqnarray}

Let $A$ be the ring of invariants graded by degree. Then the fourth Veronese subring $A^{[4]}$ is generated by $s_1,s_2,s_3$ and $r^2$. But (\ref{deg2rel}) expresses $r^2$ as a polynomial in the other invariants, so
\begin{eqnarray*}
\Proj \,A \cong \Proj \,A^{[4]} = \Proj \,k[s_1,s_2,s_3]= \Proj\, k[\tau_1,\tau_2,\rho] =\pp (4,4,4)\cong\pp^2
\end{eqnarray*}
Thus one recovers the usual description of $M_2\cong \affine^2$ as the complement of the line $\{\rho=0\}$ inside $\Proj \,k[\tau_1,\tau_2,\rho] =\pp^2$.

Let us write the relation (\ref{deg2rel}) in terms of $\tau_1,\tau_2$ and $\rho$:
\begin{eqnarray*}
r^2= -2 \tau_1^3 - \tau_1^2 \tau_2 + \tau_1^2 \rho + 8 \tau_1 \tau_2 \rho - 12 \tau_1 \rho^2 + 4 \tau_2^2 \rho -
    12 \tau_2 \rho^2 + 36 \rho^3\end{eqnarray*}
Comparing with \cite{SilvermanMod} Prop. 4.15, one can see that the locus $\{r=0\}$ is precisely the locus in $M_2$ of maps with non-trivial automorphism group.

\subsection{Constructing a model from the moduli}

The method of this section will work for maps of any even degree. For concreteness, we illustrate it only in the quadratic case.

We first note that $a_{ijk}$ are invariant under permutations of the indices. In terms of the basic invariants they are
\[
a_{000}=\frac{1}{9}s_3 r;\quad\quad a_{100}=0;\quad\quad a_{110}=\frac{-1}{9}s_2 r;\quad\quad a_{111}=\frac{-2}{9}s_1 r
\]

\begin{thm} Let $P\in M_2(k)$ be a moduli point corresponding to a $\bar{k}$-conjugacy class of maps with trivial automorphism group.  Let $s_1,s_2,s_3,r\in k$ be weighted projective coordinates with values in $k$ corresponding to $P$ (see Lemma \ref{hilb90}). Let $W_0,W_1\in k[X_0,X_1]$ be any pair of linear forms for which the corresponding coordinate transformation
$$
X_0\mapsto W_0(X_0,X_1)\quad; \quad X_1\mapsto W_1(X_0,X_1)
$$
is in $\SLtwo (k)$. Then the linear and cubic forms 
\begin{eqnarray*}\label{recoverquadratic}
 f&=&b_0W_0-b_1 W_1\\
g&=&\frac{9}{2r} \sum_{i,j,k\in\ff_2} (-1)^{i+j+k} a_{ijk}W_{i+1} W_{j+1} W_{k+1}\\
\end{eqnarray*}
have coefficients in $k$ and correspond to the moduli point $P$, i.e. $[f,g]=P$. 
\end{thm}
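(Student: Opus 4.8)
The plan is to treat this as the odd-order analogue of the covariants method used in Theorem~\ref{covariantstheorem}. Here the system consists of a linear form $f$ and a cubic form $g$, both of \emph{odd} order, so we are in the situation of Section~\ref{oddorder}: we may reconstruct $f$ and $g$ from their typical presentations in terms of two independent \emph{linear} covariants. I take these to be $V_0=(H,f)_1$ and $V_1=(g,f^2)_2$ from Table~\ref{tab:Covariantsdegree2}, whose resultant is (up to sign) the invariant $r=(V_0,V_1)_1$. The first step is to verify that $V_0,V_1$ are independent, i.e. that $r\neq 0$; this is exactly where the hypothesis on $P$ enters. By the remark following~(\ref{deg2rel}), the locus $\{r=0\}$ is precisely the locus of maps with non-trivial automorphism group, so the assumption that $P$ corresponds to a class with trivial automorphism group guarantees $r\neq0$ and hence that the typical presentation of Section~\ref{oddorder} is available.

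Next I would fix, as in the proof of Theorem~\ref{covariantstheorem}, a model $(f_0,g_0)$ over $\bar{k}$ whose invariants are the chosen weighted-projective coordinates $s_1,s_2,s_3,r\in k$, and write $V_{0},V_{1}$ for its two linear covariants. Applying~(\ref{genrel}) with $m=1$ to $f_0$ and with $m=3$ to $g_0$ (reading the products of transvectants as generalized transvectants) expresses $f_0$ as a linear form, and $g_0$ as a cubic form, in $V_0,V_1$, with coefficients built from the invariants $b_0,b_1$ and $a_{ijk}$ respectively; these coefficients are polynomials in $s_1,s_2,s_3,r$, hence lie in $k$. Since $r\neq0$, the pair $(V_0,V_1)$ is a basis of the space of linear forms, and so is $(W_0,W_1)$, the latter coming from an element of $\SLtwo(k)$; thus there is a unique $M\in\GLtwo(\bar{k})$ with $V_i\circ M=W_i$. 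Because the $V_i$ are covariants and the Clebsch--Gordan correspondence is $\SLtwo$-equivariant, substituting $W_i$ for $V_i$ in the typical presentations replaces $(f_0,g_0)$ by the conjugate pair $(f_0\circ M,\,g_0\circ M)$ (up to a common scalar), exactly as in Theorem~\ref{covariantstheorem}. Hence the resulting $f,g$ satisfy $[f,g]=[f_0,g_0]=P$, and they have coefficients in $k$ since the $b_i$, the $a_{ijk}$ and the coefficients of $W_0,W_1$ all lie in $k$.

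The conceptual payoff---and the reason this yields FOM${}={}$FOD, in accordance with Silverman's even-degree theorem---is that the reconstructing covariants here are \emph{linear} rather than quadratic. In place of the conic $\calC_P$ of Theorem~\ref{covariantstheorem} one needs only a parametrization of $\pp^1$ by two independent linear forms, and such a parametrization defined over $k$ always exists: one may even take $W_0=X_0,\ W_1=X_1$. There is therefore no conic obstruction, and a model over the field of moduli always exists. The step I expect to be the main obstacle is purely computational: pinning down the transvectant normalization constants so that the two typical presentations are scaled \emph{compatibly}---that is, so that the reconstructed $f$ and $g$ differ from $(f_0\circ M,\,g_0\circ M)$ by a single common factor and therefore represent the same weighted-projective point $P$---and thereby confirming the explicit factor $\tfrac{9}{2r}$, the values of the $a_{ijk}$ recorded above (in particular $a_{100}=0$), and the cancellation producing $f=b_0W_0-b_1W_1$. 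This is routine symbolic bookkeeping, most safely checked by computing the invariants of the explicit pair $(f,g)$ directly and comparing with $(s_1,s_2,s_3,r)$.
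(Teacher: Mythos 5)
Your proposal follows essentially the same route as the paper: fix a model $(f_0,g_0)$ over $\bar{k}$ with the given invariant values, use the odd-order typical presentation in the linear covariants $V_0,V_1$ (available because $r\neq 0$ off the automorphism locus), and substitute the $k$-rational frame $W_0,W_1$. The one step you defer---checking that the prefactors $1$ and $9/(2r)$ make the two presentations scale compatibly---is exactly what the paper carries out, by writing the change of frame as $M=\alpha N$ with $N\in\SLtwo(\bar{k})$, using $r=\det[V_0,V_1]=\alpha^{-2}$, and verifying that the resulting pair is a common scalar multiple of an $\SLtwo$-conjugate of $(f_0,g_0)$ with invariants $r^2 s_i$.
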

\begin{proof}
We apply the the typical presentation for systems of odd order forms from section \ref{oddorder}.  

Note that $b_i,a_{ijk}\in k$, since $s_1,s_2,s_3,r\in k$. There exists a pair $(f_0,g_0)$ of forms with coefficients in $\bar{k}$ whose invariants are equal to the $s_1,s_2,s_3$ and $r$ from the theorem statement. Applying (\ref{linrel}) and (\ref{genrel}) to a the linear form $f_0$ and cubic form $g_0$, and using the notation of Table \ref{tab:Covariantsdegree2}, one has
\begin{eqnarray*}\label{recoverquadratic}
r f_0&=&b_0 V_0-b_1 V_1\\
2r^3 g_0&=&9 \sum_{i,j,k\in\ff_2} (-1)^{i+j+k} a_{ijk}V_{i+1} V_{j+1} V_{k+1}\\
\end{eqnarray*}
Note that $V_i$ are covariants of the pair $(f_0,g_0)$. Since the point $P$ is outside of the locus of maps with nontrivial automorphism group, $r$ is nonzero.

Given any pair $W_0,W_1\in k[X_0,X_1]$ of linearly independent linear forms, let $M\in\GLtwo(\bar{k})$ be the coordinate transformation from $V_0,V_1$, to $W_0,W_1$. Write $M=\alpha N$ for $N\in \SLtwo(\bar{k})$ and $\alpha \in \bar{k}$. Note that $r=\det[V_0,V_1]=\alpha^{-2}$. Then
\begin{eqnarray*}
r f_1&=&b_0W_0-b_1 W_1\\
2r^3 g_1&=&9 \sum_{i,j,k\in\ff_2} (-1)^{i+j+k} a_{ijk}W_{i+1} W_{j+1} W_{k+1}\\
\end{eqnarray*}
On the other hand, $f_1=\alpha^{-1}f_0\circ N$ and $g_1=\alpha^{-3} g_0\circ N$. Setting $f=f_1$ and $g=rg_1$ one obtains the forms a pair of forms projectively equivalent to those from the theorem statement; their invariants are $r^2 s_i$, so they correspond to the moduli point $P$, as required.
\end{proof}

\ti{Acknowledgements.} The author would like to thank Bart van Steirtegem for pointing out the route of describing $M_3$ in terms of the ring of $\SLtwo$-invariants and for many helpful discussions. The author is also grateful to Nikita Miasnikov and Barinder Banwait for useful discussions.

\section{Appendix: Invariants for cubic rational maps}

The expressions for the invariants terms of the coefficients $c_i$ of the pair 
\begin{align*} 
 f&=
c_{1} X_0^{2} + c_{2} X_0 X_1 + c_{3} X_1^{2}\\
g&=
c_{4} X_0^{4} + c_{5} X_0^{3} X_1 + c_{6} X_0^{2} X_1^{2} + c_{7} X_0 X_1^{3} + c_{8} X_1^{4}
 \end{align*}
 of quadratic and quartic forms are
\begin{align*}
  d&=
-\frac{1}{2} c_{2}^{2} + 2 c_{1} c_{3}\\
  i&=
\frac{1}{6} c_{6}^{2} - \frac{1}{2} c_{5} c_{7} + 2 c_{4} c_{8}\\
  j&=
-\frac{1}{36} c_{6}^{3} + \frac{1}{8} c_{5} c_{6} c_{7} - \frac{3}{8} c_{4} c_{7}^{2} - \frac{3}{8} c_{5}^{2} c_{8} + c_{4} c_{6} c_{8}\\
  a&=
c_{3}^{2} c_{4} - \frac{1}{2} c_{2} c_{3} c_{5} + \frac{1}{6} c_{2}^{2} c_{6} + \frac{1}{3} c_{1} c_{3} c_{6} - \frac{1}{2} c_{1} c_{2} c_{7} + c_{1}^{2} c_{8}\\
  b&=
-\frac{1}{8} c_{3}^{2} c_{5}^{2} + \frac{1}{3} c_{3}^{2} c_{4} c_{6} + \frac{1}{12} c_{2} c_{3} c_{5} c_{6} - \frac{1}{36} c_{2}^{2} c_{6}^{2} - \frac{1}{18} c_{1} c_{3} c_{6}^{2} - \frac{1}{2} c_{2} c_{3} c_{4} c_{7} \\
&\quad + \frac{1}{24} c_{2}^{2} c_{5} c_{7} + \frac{1}{12} c_{1} c_{3} c_{5} c_{7} + \frac{1}{12} c_{1} c_{2} c_{6} c_{7} \\
&\quad - \frac{1}{8} c_{1}^{2} c_{7}^{2} + \frac{1}{3} c_{2}^{2} c_{4} c_{8} + \frac{2}{3} c_{1} c_{3} c_{4} c_{8} - \frac{1}{2} c_{1} c_{2} c_{5} c_{8} + \frac{1}{3} c_{1}^{2} c_{6} c_{8} \\
 c&= \frac{1}{32} c_{3}^{3} c_{5}^{3} - \frac{1}{8} c_{3}^{3} c_{4} c_{5} c_{6} - \frac{1}{32} c_{2} c_{3}^{2} c_{5}^{2} c_{6} + \frac{1}{8} c_{2} c_{3}^{2} c_{4} c_{6}^{2} + \frac{1}{4} c_{3}^{3} c_{4}^{2} c_{7} \\
 &\quad- \frac{1}{16} c_{2} c_{3}^{2} c_{4} c_{5} c_{7} + \frac{1}{32} c_{2}^{2} c_{3} c_{5}^{2} c_{7} + \frac{1}{32} c_{1} c_{3}^{2} c_{5}^{2} c_{7} - \frac{1}{8} c_{2}^{2} c_{3} c_{4} c_{6} c_{7} - \frac{1}{8} c_{1} c_{3}^{2} c_{4} c_{6} c_{7}\\
 &\quad + \frac{1}{32} c_{2}^{3} c_{4} c_{7}^{2} + \frac{3}{16} c_{1} c_{2} c_{3} c_{4} c_{7}^{2} - \frac{1}{32} c_{1} c_{2}^{2} c_{5} c_{7}^{2}  - \frac{1}{32} c_{1}^{2} c_{3} c_{5} c_{7}^{2} + \frac{1}{32} c_{1}^{2} c_{2} c_{6} c_{7}^{2} \\
&\quad - \frac{1}{32} c_{1}^{3} c_{7}^{3} - \frac{1}{2} c_{2} c_{3}^{2} c_{4}^{2} c_{8} + \frac{1}{4} c_{2}^{2} c_{3} c_{4} c_{5} c_{8} + \frac{1}{4} c_{1} c_{3}^{2} c_{4} c_{5} c_{8} - \frac{1}{32} c_{2}^{3} c_{5}^{2} c_{8}\\
  &\quad- \frac{3}{16} c_{1} c_{2} c_{3} c_{5}^{2} c_{8} + \frac{1}{8} c_{1} c_{2}^{2} c_{5} c_{6} c_{8} + \frac{1}{8} c_{1}^{2} c_{3} c_{5} c_{6} c_{8} - \frac{1}{8} c_{1}^{2} c_{2} c_{6}^{2} c_{8} - \frac{1}{4} c_{1} c_{2}^{2} c_{4} c_{7} c_{8} \\
  &\quad- \frac{1}{4} c_{1}^{2} c_{3} c_{4} c_{7} c_{8} + \frac{1}{16} c_{1}^{2} c_{2} c_{5} c_{7} c_{8} + \frac{1}{8} c_{1}^{3} c_{6} c_{7} c_{8} + \frac{1}{2} c_{1}^{2} c_{2} c_{4} c_{8}^{2} - \frac{1}{4} c_{1}^{3} c_{5} c_{8}^{2}
 \end{align*}

\newpage




\bibliographystyle{amsalpha}
\bibliography{bibcon}

\end{document}